\newtheorem{Thm}{Theorem}[section]
\newtheorem{Prop}[Thm]{Proposition}
\newtheorem{Lem}[Thm]{Lemma}
\newtheorem{Cor}[Thm]{Corollary}
\newtheorem{Thmint}{Theorem}[section]
\newtheorem{Defint}[Thmint]{Definition}
\newtheorem{MThm}{Main}
\theoremstyle{definition}
\newtheorem{Rem}[Thm]{Remark}
\newtheorem{Exam}[Thmint]{Example}
\newcommand{\Cs}{C$^\ast$}
\newcommand{\id}{\mbox{\rm id}}
\newcommand{\rg}{\mathop{{\mathrm C}_{\mathrm r}^\ast}}
\newcommand{\fg}{\mathop{{\mathrm C}^\ast}}
\newcommand{\rc}{\mathop{\rtimes _{\mathrm r}}}
\newcommand{\rca}[1]{\mathop{\rtimes _{{\mathrm r}, #1}}}
\newcommand{\Cn}[1]{\mathcal{O}_{#1}}
\newcommand{\U}{\mathcal{U}}
\DeclareMathOperator{\supp}{supp}
\DeclareMathOperator{\bigfp}{\lower0.25ex\hbox{\LARGE $\ast$}}
\newcommand{\ad}{\mathop{\rm Ad}}
\title[Equivariant $\Cn{2}$-absorption for exact groups]
{Equivariant $\Cn{2}$-absorption theorem for exact groups}
\author{Yuhei Suzuki}
\subjclass[2020]{Primary~
46L55, Secondary~46L05}
\keywords{\Cs-dynamical systems, strong cocycle conjugacy, amenable actions}
\address{Department of Mathematics, Faculty of Science, Hokkaido University,
Kita 10, Nishi 8, Kita-Ku, Sapporo, Hokkaido, 060-0810, Japan}
\email{yuhei@math.sci.hokudai.ac.jp}
\begin{document}
\maketitle
\begin{abstract}
We show that, up to strong cocycle conjugacy,
every countable exact group admits a unique equivariantly $\Cn{2}$-absorbing,
pointwise outer action on the Cuntz algebra $\Cn{2}$ with the quasi-central approximation property (QAP).
In particular, we establish the equivariant analogue of
the Kirchberg $\Cn{2}$-absorption theorem for these groups.
\end{abstract}
\section{Introduction}
The Kirchberg $\Cn{2}$-absorption theorem (\cite{Kir}, \cite{KP}) states that the Cuntz algebra $\Cn{2}$ tensorially absorbs all unital simple separable nuclear \Cs-algebras $A$;
$A \otimes \Cn{2} \cong \Cn{2}$.
This is one of the fundamental ingredients of Phillips's proof of the
 classification theorem of Kirchberg algebras \cite{Phi}
(see \cite{Kir} for Kirchberg's  original approach).
Here recall that a \Cs-algebra is said to be a Kirchberg algebra
if it is purely infinite simple, separable, nuclear.
We refer the reader to the book \cite{Ror} for basic facts and background on
the Kirchberg algebras.

Considering the special roles of this beautiful, deep, surprising theorem of Kirchberg,
and also observing the recent developments on the classification of \Cs-dynamical systems
(see e.g., \cite{IM}, \cite{IM2}, \cite{Sz18}),
it is a natural and important attempt to establish an equivariant analogue of the $\Cn{2}$-absorption theorem.
(For history and background of the study of group actions on operator algebras,
we refer the reader to \cite{IICM} or the introduction of \cite{Sz18} and references therein.)
Indeed, among other things, recently Szab\'o \cite{Sz18}
established the theorem for countable amenable groups.
Here we extend the theorem to countable exact groups.
The precise statement is as follows.
\begin{MThm}
Let $G$ be a countable exact group.
Let $\delta \colon G \curvearrowright \Cn{2}$ be
an equivariantly $\Cn{2}$-absorbing, pointwise outer
action with the quasi-central approximation property $($QAP$)$ \cite{BEW}.
Then for any action $\alpha \colon G \curvearrowright A$
on a unital simple separable nuclear \Cs-algebra,
the diagonal action $\alpha \otimes \delta$ is strongly cocycle conjugate to $\delta$.
\end{MThm}
For the definition of the QAP, see Sections \ref{subsection:l2} and \ref{subsection:qap}.
Note that such a $\delta$ always exists (see Example \ref{Exam:model} below or \cite{Suzeq}),
and (therefore) all the assumptions cannot be removed (cf.~Remark \ref{Rem:abs}).
Thus one can regard this theorem as \emph{the} equivariant $\Cn{2}$-absorption theorem.

As a rephrasing of the Main Theorem, we obtain the following uniqueness theorem.
\begin{Thmint}\label{Thmint:unique}
Let $G$ be a countable exact group.
Then, up to strong cocycle conjugacy,
there is a unique equivariantly $\Cn{2}$-absorbing, pointwise outer, QAP
action $G\curvearrowright \Cn{2}$.
\end{Thmint}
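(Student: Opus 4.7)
The plan is to derive Theorem \ref{Thmint:unique} as a direct consequence of the Main Theorem. Existence of at least one such action is already supplied by Example \ref{Exam:model} (or alternatively by \cite{Suzeq}), so the substantive content lies in the uniqueness assertion.

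For uniqueness, I would take two actions $\delta_1, \delta_2 \colon G \curvearrowright \Cn{2}$ that are equivariantly $\Cn{2}$-absorbing, pointwise outer, and have the QAP, and show directly that they are strongly cocycle conjugate. The key observation is that each $\delta_i$ is an action on a unital simple separable nuclear \Cs-algebra (namely $\Cn{2}$ itself), so the Main Theorem applies with either one playing the role of $\delta$ and the other playing the role of $\alpha$. First, applying the Main Theorem with $\delta := \delta_1$ and $\alpha := \delta_2$ shows that $\delta_2 \otimes \delta_1$ is strongly cocycle conjugate to $\delta_1$. Swapping the roles, applying it with $\delta := \delta_2$ and $\alpha := \delta_1$ shows that $\delta_1 \otimes \delta_2$ is strongly cocycle conjugate to $\delta_2$. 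The flip automorphism of $\Cn{2} \otimes \Cn{2}$ provides a $G$-equivariant isomorphism between $\delta_1 \otimes \delta_2$ and $\delta_2 \otimes \delta_1$, which is a fortiori a strong cocycle conjugacy. Chaining the three equivalences via transitivity yields that $\delta_1$ is strongly cocycle conjugate to $\delta_2$.

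There is essentially no technical obstacle in this step: the argument is the familiar symmetric tensoring trick that converts a self-absorption theorem into a uniqueness statement, and the only nontrivial ingredient is the Main Theorem itself. The only minor points to confirm are that strong cocycle conjugacy is genuinely transitive (built into the definition) and that the flip furnishes a conjugacy of the relevant diagonal actions (immediate since it intertwines $g \mapsto \delta_{1,g}\otimes \delta_{2,g}$ and $g \mapsto \delta_{2,g}\otimes \delta_{1,g}$). Accordingly, the hard part is the Main Theorem itself, not this corollary.
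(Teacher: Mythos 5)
Your argument is correct and is exactly the intended derivation: the paper presents Theorem \ref{Thmint:unique} as a ``rephrasing'' of the Main Theorem, with existence supplied by Example \ref{Exam:model} and uniqueness obtained by the symmetric tensoring/flip trick you describe. No issues.
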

It is notable that, as all the three conditions are stable under cocycle conjugacy, the above theorem gives the first abstract
characterization of a (strong) cocycle conjugacy class of pointwise outer actions
of a non-amenable group on a simple operator algebra.

We emphasize that all the previously known classification results of
simple equivariant
operator algebras (in both \Cs- and W$^\ast$-cases) use amenability 
of the acting group in substantial ways.
Theorem \ref{Thmint:unique} is the first positive evidence that a fruitful classification theory of \Cs-dynamical systems could be developed even beyond amenable groups.

\begin{Exam}\label{Exam:model}
Here we give motivating examples of actions which satisfy the assumptions of the Main Theorem; cf.~\cite{Suzeq}.
Let $G$ be a countable exact group.
For each $n\in \mathbb{N}$,
choose a (topologically) amenable action $\eta_n \colon G \curvearrowright X_n$
on a compact metrizable space $X_n$ (\cite{BO}, Theorem 5.1.7).
For each $n\in \mathbb{N}$,
choose a unital embedding $\iota_n \colon C(X_n) \rca{\eta_n} G \rightarrow \Cn{2}$ \cite{KP}, \cite{Kir}.
Each $\iota_n$ defines a unitary representation
$u_n \colon G \rightarrow \mathcal{U}(\Cn{2})$ by the restriction.
Then the diagonal action \[\bigotimes _{n \in \mathbb{N}} \ad(u_n) \colon G \curvearrowright \bigotimes _{n \in \mathbb{N}} \Cn{2}\]
defines an action \[\delta \colon G \curvearrowright \Cn{2}\]
via an isomorphism $\bigotimes _{n \in \mathbb{N}} \Cn{2} \cong \Cn{2}$ (\cite{Ror}).
It is clear from the construction
that $\delta$ satisfies the assumptions of the Main Theorem.
Thus, as a particular consequence of the Main Theorem,
the strong cocycle conjugacy class of
$\delta$ is independent on the choices of $\eta_n$ and $\iota_n$.
This already implies the following non-trivial consequence:
For any amenable action $\eta \colon G \curvearrowright X$
on a compact metrizable space
and for any choices of $\eta_n$ and $\iota_n$,
there is a unital $G$-equivariant embedding
\[(C(X), \eta) \rightarrow ((\Cn{2})_\omega, \delta_\omega).\]

\end{Exam}

Our proof of the Main Theorem mostly follows that of Szab\'o's theorem (\cite{Sz18}, Theorem C).
However at a few crucial steps, amenability of the acting group is used in essential ways.
In particular, this includes the most important step Lemma 2.3 of \cite{Sz18}.
This lemma is used to carry out Connes's two-by-two matrix trick \cite{Con}
(see \cite{Sz18}, Lemmas 2.4 and 5.1).
Unfortunately the statement of the lemma no longer holds true for non-amenable groups; see Remark \ref{Rem:fp}.
However, we will see in Lemma \ref{Lem:fp} that, under appropriate assumptions,
the statement is still valid.
This is the main contribution of the present article toward the Main Theorem.

As an intermediate result in the proof of the Main Theorem,
we also obtain the following useful characterizations of the QAP in an important case.
We use this result together with Lemma \ref{Lem:fp} to deduce the key result Corollary \ref{Cor:pi}.

\begin{Thmint}\label{Thmint:QAP}
Let $\alpha \colon G \curvearrowright A$ be an action of a countable exact group
on a unital simple separable nuclear \Cs-algebra.
Let $\omega$ be a free ultrafilter on $\mathbb{N}$.
Then the following conditions are pairwise equivalent.
\begin{enumerate}[\upshape(1)]
\item The $\alpha$ has the QAP.
\item There is a sequence $(\xi_n)_{n=1}^\infty$ in $\ell^2(G, A_\omega)$
satisfying
$\lim_{n \rightarrow \infty} \langle \xi_n, \tilde{\alpha}_g(\xi_n)\rangle=1$ for all $g\in G$.
\item There is a sequence $(k_n \colon G \rightarrow A_\omega)_{n=1}^\infty$
of finitely supported positive definite functions
satisfying $\lim_{n\rightarrow \infty} k_n(g) =1$ for all $g\in G$.
\item The induced action $\alpha^\omega \colon G \curvearrowright A^\omega$ on the ultrapower $A^\omega$ has the QAP.

\item The induced action $\alpha_\omega \colon G \curvearrowright A_\omega$
on the central sequence algebra $A_\omega$ has the QAP.
\item For any separable $G$-\Cs-subalgebra $B\subset A^\omega$,
the restriction of $\alpha^\omega$ to $A^\omega \cap B'$ has the QAP.
\end{enumerate}
\end{Thmint}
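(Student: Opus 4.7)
The plan is to treat condition (1) as the hub: conditions (2) and (3) are internal QAP-like conditions inside $A_\omega$ that I will link to (1) via the correspondence between equivariantly positive definite functions and vectors in the Hilbert module $\ell^2(G,B) := B\otimes \ell^2(G)$; conditions (4)--(6) are QAP assertions for various induced actions and will be linked to (1) via standard reindexing and central-sequence lifts. For any $C^*$-algebra $B$ carrying a $G$-action $\beta$, the fundamental correspondence is that each $\xi\in \ell^2(G,B)$ determines an equivariantly positive definite function $k_\xi(g) := \langle \xi,\tilde\beta_g(\xi)\rangle = \sum_h \xi(h)^*\beta_g(\xi(g^{-1}h))$; conversely, any finitely supported equivariantly positive definite $k:G\to B$ is of the form $k_\xi$ for a finitely supported $\xi$, obtained by taking a positive square root of the matrix $\bigl[\beta_{g^{-1}}(k(g^{-1}h))\bigr]_{g,h\in S}$ over a sufficiently large finite support $S$.

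For the pointwise cluster, (1)$\Rightarrow$(3) is immediate by setting $k_i := k_{\xi_i}$ for QAP witnesses $\xi_i:G\to A$ viewed in $A_\omega$; (3)$\Rightarrow$(2) is the GNS square root in $M_{|S|}(A_\omega)$, producing finitely supported $\xi_n\in \ell^2(G,A_\omega)$; and (2)$\Rightarrow$(3) follows by truncating each $\xi_n$ to a sufficiently large finite window (possible since $\sum_g \xi_n(g)^*\xi_n(g)\in A_\omega$) and taking $k_n := k_{\xi_n}$. The nontrivial direction (3)$\Rightarrow$(1) requires lifting each finitely supported positive definite $k_n:G\to A_\omega$ to a finitely supported approximately positive definite $k_n^{(m)}:G\to A$, perturbing via continuous functional calculus on the GNS matrix to obtain a genuinely positive lift, extracting a square-root vector $\xi_n^{(m)}\in \ell^2(G,A)$, and diagonally selecting along $\omega$ to match given finite test data.

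For the ultrapower cluster, (1)$\Rightarrow$(4), (1)$\Rightarrow$(5), and (1)$\Rightarrow$(6) are all reindexing arguments: given a countable test set (a dense subset of the relevant test set in $A^\omega$, $A_\omega$, or $B\subset A^\omega$), I enumerate representing sequences in $A$ and use the QAP of $\alpha$, via a diagonal along $\omega$, to produce QAP approximants for the corresponding induced action. The converse directions (4)$\Rightarrow$(1) and (5)$\Rightarrow$(1) proceed by lifting finitely supported approximants from $A^\omega$ (resp.~$A_\omega$) to representing sequences in $A$; the $A_\omega$-case automatically yields quasi-centrality with respect to $A$, since $A_\omega = A^\omega\cap A'$. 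Finally (6)$\Rightarrow$(5) is trivial with $B:=A$.

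The main obstacle will be the lift (3)$\Rightarrow$(1), and to a lesser extent the analogous lifts between the ultrapowers and $A$, where one must simultaneously control positivity, finite support, quasi-centrality, and convergence to $1$ while descending from $A_\omega$ to $A$. This is precisely where the hypotheses that $G$ be exact and $A$ be unital, simple, separable, and nuclear come into play: nuclearity combined with exactness provides the approximation-theoretic amenability of the ambient dynamical system needed to execute the square-root lift within $A^\omega$, while simplicity and separability ensure that $A_\omega$ is rich enough for the diagonal extraction.
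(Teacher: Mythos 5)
Your proposal has the difficulty of this theorem exactly inverted, and the step you dismiss as immediate is the one that carries all the weight. The implication (1)$\Rightarrow$(3) is \emph{not} obtained by ``viewing'' QAP witnesses $\xi_i\in\ell^2(G,A)$ in $A_\omega$: since $A$ is simple, $A\cap A_\omega=\mathbb{C}1$, so the components $\xi_{i,h}\in A$ do not lie in $A_\omega$, and their centrality is only approximate. Promoting them to exactly central elements requires a diagonal argument along $\omega$, and this breaks down because the finite supports of the witnesses are not uniformly controlled as the degree of quasi-centrality improves: the diagonal sequence $\eta_h:=[\xi_{i(n),h}]_n^\omega$ need not define an element of $\ell^2(G,A_\omega)$ at all (norm convergence of $\sum_h\eta_h^\ast\eta_h$ can fail), and in the absence of F{\o}lner sets the mass can spread out over $G$ so that every $\eta_h$ vanishes and $\langle\eta,\tilde{\alpha}_g(\eta)\rangle=0$ rather than $1$. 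The same obstruction defeats your claim that (1)$\Rightarrow$(4), (5), (6) are ``all reindexing arguments.'' This is precisely why the paper singles out (1)$\Rightarrow$(4) as the crucial implication and stresses that conditions (2)--(6) are not obvious even for the model actions of Example \ref{Exam:model}.

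The paper's route for (1)$\Rightarrow$(3) is entirely different and invokes the global machinery of the article: exactness of $G$ provides an amenable action $\eta\colon G\curvearrowright X$ on a compact metrizable space; Proposition \ref{Prop:QAP} --- which rests on Lemma \ref{Lem:fp}, Lemma \ref{Lem:cob}, and Corollary \ref{Cor:emb}, and uses $A\otimes\Cn{2}\cong\Cn{2}$, i.e.\ simplicity and nuclearity of $A$ via the Kirchberg theorems --- yields a $G$-equivariant unital completely positive map $\Phi\colon C(X)\rightarrow A_\omega$; the finitely supported positive definite functions witnessing amenability of $\eta$ are then pushed forward by $\Phi$. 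By contrast, the directions you identify as the main obstacles, namely (3)$\Rightarrow$(1) and the descents (4)$\Rightarrow$(1), (5)$\Rightarrow$(1), are the routine ones: the paper goes (3)$\Rightarrow$(2) by \cite{AD} and then (2)$\Rightarrow$(6)$\Rightarrow$(4)$\Rightarrow$(1), where the lifts are unproblematic because a finitely supported vector in $\ell^2(G,A_\omega)$ or $\ell^2(G,A^\omega)$ admits representing sequences with support contained in a fixed finite set. As the paper remarks after the proof, (2)--(6) are mutually equivalent and imply (1) for arbitrary countable group actions on unital separable \Cs-algebras, with no exactness, simplicity, or nuclearity needed; so the elaborate square-root lifting you propose for (3)$\Rightarrow$(1) is unnecessary, while the genuinely hard ascent from $A$ to $A_\omega$ is missing from your argument.
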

We stress that even for some actions in Example \ref{Exam:model} (see also \cite{Suzeq}, \cite{Suz19}),
validity of conditions (2) to (6) are not obvious.
The crucial implication used in the present article
is (1) $\Rightarrow$ (4); cf.~Lemma \ref{Lem:fp} and Corollary \ref{Cor:pi}.

We also discuss an equivariant analogue of the Kirchberg $\Cn{\infty}$-absorption theorem \cite{Kir}, \cite{KP}.
Recall that the theorem
states that the Cuntz algebra $\Cn{\infty}$ is tensorially absorbed by
any Kirchberg algebra $A$; $A\otimes \Cn{\infty} \cong A$.
This is another main ingredient of the proof of the Kirchberg--Phillips classification theorem in \cite{Phi}. Therefore, similar to the $\Cn{2}$-absorption theorem,
it is desirable to obtain an equivariant analogue of the $\Cn{\infty}$-absorption theorem.
Indeed Szab\'o obtains such results
for countable amenable groups (see Theorems B and 3.5 in \cite{Sz18}).
In this article, we prove the following result for QAP actions on unital Kirchberg algebras.

We first give model (absorbed) actions on $\Cn{\infty}$.
\begin{Defint}\label{Defint:fact}
Let $u \colon G \rightarrow \mathcal{U}(\Cn{\infty})$
be a unitary representation of a group $G$.
We say that \emph{$u$ factors through $\Cn{2}$}
if there is a unitary representation $v \colon G \rightarrow \mathcal{U}(\Cn{2})$
and a $($non-unital$)$ embedding $\iota \colon \Cn{2}\rightarrow \Cn{\infty}$ satisfying 
\[u_g= \iota(v_g)+ (1_{\Cn{\infty}}-\iota(1_{\Cn{2}})) \qquad{\rm~ for~ all~} g\in G.\]
{\rm Note that by the Kirchberg $\Cn{2}$-embedding theorem $($applied to the reduced group \Cs-algebra$)$, every countable exact group admits
such a faithful representation.
Note that unlike Example 3.6 in \cite{Sz18}, here we do not assume the injectivity of
the induced $\ast$-homomorphism on $\fg(G)$.}
\end{Defint}
\begin{Exam}[cf.~Example 3.6 in \cite{Sz18}]\label{Exam:model2}
Let $G$ be a countable group.
Let $(u_n)_{n=1}^\infty$ be a sequence of unitary representations of $G$ on $\Cn{\infty}$ which factor through $\Cn{2}$.
Then the diagonal action
\[\bigotimes_{n\in \mathbb{N}} \ad(u_n)\colon G \curvearrowright \bigotimes_{\mathbb{N}} \Cn{\infty}\]
defines an action
\[\gamma \colon G \curvearrowright \Cn{\infty}\]
via an isomorphism $\bigotimes_{\mathbb{N}} \Cn{\infty} \cong \Cn{\infty}$ (\cite{Ror}).
We remark that $\gamma$ admits an invariant state.
Thus, when $G$ is non-amenable, $\gamma$ cannot have the QAP.
\end{Exam}
\begin{Thmint}\label{Thmint:Oinf}
Let $G$ be a countable exact group.
Let $\alpha \colon G \curvearrowright A$
be a pointwise outer, QAP action on a unital Kirchberg algebra.
Then for any action $\gamma$ in Example \ref{Exam:model2},
$\alpha$ is strongly cocycle conjugate to $\alpha \otimes \gamma$.
In particular $\alpha$ is equivariantly $\Cn{\infty}$-absorbing.
\end{Thmint}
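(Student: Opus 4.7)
The plan is to follow the scheme of Szab\'o's proof of Theorem 3.5 in \cite{Sz18}, replacing the amenability-based ingredients by the tools of the present paper --- most crucially Lemma \ref{Lem:fp}, Theorem \ref{Thmint:QAP}, and Corollary \ref{Cor:pi}.

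First, I would reduce the claim $\alpha \sim_{\mathrm{scc}} \alpha \otimes \gamma$ to the construction of a unital $G$-equivariant $\ast$-homomorphism
\[
\varphi \colon (\Cn{\infty}, \gamma) \longrightarrow (A_\omega \cap A', \alpha_\omega|_{A_\omega \cap A'})
\]
whose associated unitary cocycle is a coboundary. The coboundary step is carried out by the equivariant two-by-two matrix trick of Connes (cf.~Lemmas 2.4 and 5.1 in \cite{Sz18}); in the non-amenable setting this trick requires pointwise outerness of $\alpha$ together with Lemma \ref{Lem:fp}, which supplies the approximate fixed-point data that amenability would otherwise provide.

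For the central equivariant embedding $\varphi$, I would build it tensor-factor by tensor-factor and glue via a standard reindexing argument. Since each $u_n$ factors through $\Cn{2}$ as $u_{n,g} = \iota_n(v_{n,g}) + (1 - \iota_n(1))$, it is enough, for each $n$, to exhibit a $G$-invariant projection $p_n \in A_\omega \cap A'$ together with a unital $G$-equivariant embedding of $\Cn{2}$ into the corner $p_n(A_\omega \cap A')p_n$ intertwining $\ad(v_n)$ with $\alpha_\omega$. By implication $(1) \Rightarrow (6)$ of Theorem \ref{Thmint:QAP} the restricted action retains the QAP, by Corollary \ref{Cor:pi} it remains pointwise outer, and the sequence algebra $A_\omega \cap A'$ is unital, simple and purely infinite; the required $\Cn{2}$-corner then exists by a relative form of the Main Theorem applied inside a compression of $A_\omega \cap A'$, extended by the trivial action on the complementary piece $(1-p_n)(A_\omega\cap A')(1-p_n)$ to match the formula for $u_n$.

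The crux of the argument is the equivariant realization of the $\Cn{2}$-corner inside $A_\omega \cap A'$: without amenability there is no invariant state to anchor the construction, and the classical Rokhlin-type arguments fail. This is precisely where the combination of Lemma \ref{Lem:fp} and the inheritance of the QAP by $\alpha_\omega|_{A_\omega\cap A'}$ (Theorem \ref{Thmint:QAP}) does the heavy lifting. Finally, the ``in particular'' statement is immediate by specializing to $v_n = 1$ for all $n$: then each $u_n = 1_{\Cn{\infty}}$ and $\gamma$ becomes the trivial action on $\Cn{\infty}$, so the theorem yields $\alpha \sim_{\mathrm{scc}} \alpha \otimes \mathrm{id}_{\Cn{\infty}}$.
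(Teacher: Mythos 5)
Your overall architecture matches the paper's: reduce to producing, for each unitary representation $u_n$ factoring through $\Cn{2}$, a unital $G$-equivariant $\ast$-homomorphism $(\Cn{\infty},\ad(u_n))\rightarrow (A_\omega,\alpha)$, glue the factors by reindexation, and conclude via the equivariant McDuff theorem (for which you should also record that $\bigotimes_{\mathbb{N}}\gamma$ is strongly self-absorbing, via Proposition 5.3 of \cite{SzII}). The two-by-two matrix trick powered by Lemma \ref{Lem:fp} is indeed the engine. However, your central step has a genuine gap: you produce the equivariant $\Cn{2}$-corner by appealing to ``a relative form of the Main Theorem applied inside a compression of $A_\omega\cap A'$''. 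No such relative form is available: the Main Theorem concerns equivariantly $\Cn{2}$-absorbing actions on $\Cn{2}$ itself, and a corner of the (nonseparable) central sequence algebra satisfies none of its hypotheses. The step is also structurally circular as written: you first ask for a $G$-equivariant embedding of $(\Cn{2},\ad(v_n))$ into a corner and then separately propose to untwist a cocycle, but untwisting the cocycle \emph{is} how that equivariant embedding is obtained --- if you could produce it directly, the coboundary step would be vacuous.

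The correct route, which the paper takes, is: by Corollary \ref{Cor:pi} the fixed-point algebra $(A_\omega)^\alpha$ is purely infinite simple, so there is a (pointwise $\alpha$-fixed, but a priori not $\ad(u)$-equivariant) unital $\ast$-homomorphism $\mu_0\colon\Cn{\infty}\rightarrow (A_\omega)^\alpha$; setting $p=\mu_0(\iota(1_{\Cn{2}}))$ and $D=pA_\omega p$, one forms the twisted action $\beta$ on $\mathbb{M}_2(D)$ built from the unitaries $\mu_0(u_g)p$, checks that $\beta$ is saturated and has the QAP (Theorem \ref{Thmint:QAP}), verifies $[e_{1,1}]_0=[e_{2,2}]_0=0$ in $K_0(\mathbb{M}_2(D)^\beta)$ by exhibiting a unital copy of $\Cn{2}$ in $\mathbb{M}_2(D)^\beta\cap\{e_{1,1},e_{2,2}\}'$, and then applies Lemma \ref{Lem:fp} together with Cuntz's theorem to find $w_0$ with $\mu_0(\iota(v_g))=w_0\alpha_g(w_0^\ast)$; finally $\mu:=\ad\bigl((w_0+(1-p))^\ast\bigr)\circ\mu_0$ is the desired equivariant map. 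Two further slips in your write-up: Corollary \ref{Cor:pi} gives pure infiniteness and simplicity of the fixed-point algebra, not pointwise outerness of the restricted action (that comes from Theorem 3.1 of \cite{Sz18} and reindexation); and you omit the $K_0$-obstruction computation, without which Cuntz's theorem cannot produce the off-diagonal partial isometry implementing the coboundary. Your remark on the ``in particular'' clause (take all $v_n=1$) is fine.
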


\section{Preliminaries}
Here we recall some definitions and facts, and fix notations.

\subsection{Hilbert \Cs-bimodule $\ell^2(G, A)$}\label{subsection:l2}
Let $G$ be a discrete group. Let $A$ be a \Cs-algebra.
Define the linear space
\[\ell^2(G, A) := \left\{ (\xi_g)_{g\in G} \in \prod_G A: \sum_{g\in G} \xi_g^\ast \xi _g{\rm~converges~ in~ norm}\right\}.\]
For $a\in A$ and $\xi \in \ell^2(G, A)$,
define $a\xi, \xi a \in \ell^2(G, A)$ to be
\[(a\xi)_g := a \xi_g,\quad (\xi a)_g:=\xi_g a\qquad{\rm for~} g\in G.\]
For $\xi, \eta \in \ell^2(G, A)$, set
\[\langle \xi, \eta\rangle := \sum_{g\in G} \xi_g^\ast \eta_g \in A.\]
Note that these operations make $\ell^2(G, A)$ a Hilbert \Cs-bimodule over $A$. For $\xi \in \ell^2(G, A)$, set \[\|\xi \|:= \| \langle \xi, \xi\rangle\|^{1/2}.\]
This
defines a complete norm on $\ell^2(G, A)$.

Let $\alpha \colon G \curvearrowright A$ be an action.
Then define the norm-preserving action $\tilde{\alpha} \colon G \curvearrowright \ell^2(G, A)$
to be
\[\tilde{\alpha}_g(\xi)_h := \alpha_g(\xi_{g^{-1} h}) \qquad{\rm~ for~} g, h\in G, \xi \in \ell^2(G, A).\]
This action is used to formulate amenability-type conditions for \Cs-dynamical systems; see \cite{AD}, \cite{AD02}, \cite{BO}, \cite{BEW} for instance.
\subsection{Amenable actions on \Cs-algebras}\label{subsection:qap}
Recently, in \cite{Suzeq}, we discovered
actions of non-amenable groups on simple \Cs-algebras with properties which should be
regarded as amenability of \Cs-dynamical systems.
A few applications of such actions are already found in \Cs-algebra theory; see \cite{Suz19}, \cite{Suz20a}, \cite{Suz20b}.
We believe that this novel phenomenon provides a new rich field in
\Cs-algebra theory. One of the motivations behind the present
article is to examine a new evidence supporting this my expectation from a different angle.

While the definitive definition of amenability is still not known
for (non-commutative) \Cs-dynamical systems, in \cite{BEW}, motivated by examples constructed in \cite{Suzeq},
a new amenability-type condition, called the \emph{quasi-central approximation property} (\emph{QAP}), is introduced for \Cs-dynamical systems.
(We remark that the situation is totally different in the von Neumann algebra setting.
In the realm of von Neumann algebras, the definitive definition is already known,
and non-amenable discrete groups cannot act amenably on a factor \cite{AD79}.) 
For simplicity, here we recall the definition of the QAP only in the unital and discrete case.
See \cite{BEW} for the general case.
Recall that a discrete group action $\alpha \colon G \curvearrowright A$
on a unital \Cs-algebra is said to have the \emph{quasi-central approximation property} (\emph{QAP})
if there is a net $(\xi_i)_{i\in I}$ in $\ell^2(G, A)$
satisfying
\begin{enumerate}
\item $\lim_{i\in I} \langle \xi_i, \tilde{\alpha}_g (\xi_i) \rangle = 1_A$ for all $g\in G$,
\item $\lim_{i\in I} \|a \xi_i - \xi_i a\|=0$ for all $a\in A$.
\end{enumerate}
For actions on commutative \Cs-algebras, it is not hard to see that
the QAP is equivalent to the (topological) amenability (cf.~\cite{BO}, Lemma 4.3.7).
It is also clear that the QAP
is stable under taking equivariant inductive limits.
In particular, the actions constructed in Example \ref{Exam:model}
have the QAP.

In Theorem \ref{Thmint:QAP}, for some important cases, we obtain
useful characterizations of the QAP in terms of the (relative) central sequence algebra. This is another important step to prove the Main Theorem; cf.~Corollary \ref{Cor:pi}.

Here we record the following basic observation on the QAP. The proof is
easy and straightforward
and we leave details to the reader.
\begin{Prop}
Let $\alpha$ and $\beta$ be an action of a discrete group $G$ on a unital \Cs-algebra $A$ such that $\alpha_g \circ \beta_g^{-1}$ is inner for all $g\in G$.
Then $\alpha$ has the QAP if and only if so does $\beta$.
In particular, the $QAP$ is stable under cocycle conjugacy.
\end{Prop}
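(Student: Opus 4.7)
Fix for each $g\in G$ a unitary $u_g\in A$ satisfying $\alpha_g = \ad(u_g)\circ\beta_g$. I will show that any QAP net $(\xi_i)_{i\in I}\subset \ell^2(G,A)$ for $\beta$ is automatically a QAP net for $\alpha$; the converse then follows by the symmetric argument with $u_g^*$ in place of $u_g$. The asymptotic centrality condition $\|a\xi_i-\xi_i a\|\to 0$ depends only on $(\xi_i)$ and the $A$-bimodule structure on $\ell^2(G,A)$, not on the action, so nothing needs to be checked there.

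For the inner-product condition, the key observation is the coordinate-wise identity
\[\tilde{\alpha}_g(\xi)_h \;=\; \alpha_g(\xi_{g^{-1}h}) \;=\; u_g\,\beta_g(\xi_{g^{-1}h})\,u_g^* \;=\; u_g\,\tilde{\beta}_g(\xi)_h\,u_g^*,\]
which gives
\[\langle \xi_i,\tilde{\alpha}_g(\xi_i)\rangle \;=\; \Bigl(\sum_{h\in G}(\xi_i)_h^*\,u_g\,\beta_g((\xi_i)_{g^{-1}h})\Bigr)u_g^*.\]
I will compare this expression with $u_g\langle\xi_i,\tilde{\beta}_g(\xi_i)\rangle u_g^*$, which converges to $u_g\cdot 1\cdot u_g^*=1$ by assumption. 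Setting $(\eta_i)_h := u_g^*(\xi_i)_h - (\xi_i)_h u_g^*$, a short expansion yields
\[\langle \xi_i,\tilde{\alpha}_g(\xi_i)\rangle \;-\; u_g\langle \xi_i,\tilde{\beta}_g(\xi_i)\rangle u_g^* \;=\; \langle \eta_i,\tilde{\beta}_g(\xi_i)\rangle\, u_g^*.\]
The Cauchy--Schwarz inequality for the Hilbert \Cs-bimodule $\ell^2(G,A)$ bounds the right-hand side in norm by $\|\eta_i\|\,\|\xi_i\|$. The first factor equals $\|u_g^*\xi_i-\xi_i u_g^*\|$ and tends to $0$ by asymptotic centrality applied to $a=u_g^*$, while $\|\xi_i\|$ remains bounded since $\langle\xi_i,\xi_i\rangle\to 1$. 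Hence $\langle\xi_i,\tilde{\alpha}_g(\xi_i)\rangle\to 1$, as required.

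The ``in particular'' statement on cocycle conjugacy follows at once: any equivariant $\ast$-isomorphism manifestly transports a QAP net to a QAP net, so a cocycle conjugacy between $\alpha$ and $\beta$ reduces to two actions on $A$ differing by an inner perturbation by a $1$-cocycle $(u_g)$, which is the setting of the proposition. I foresee no serious obstacle; the only care needed is the Hilbert bimodule norm bookkeeping in the Cauchy--Schwarz step above.
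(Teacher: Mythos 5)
Your proof is correct; the paper leaves this proposition to the reader as ``easy and straightforward,'' and your argument --- transporting a QAP net for $\beta$ to one for $\alpha$ via the coordinatewise identity $\tilde{\alpha}_g(\xi)=u_g\tilde{\beta}_g(\xi)u_g^\ast$, then absorbing the error term $\langle \eta_i,\tilde{\beta}_g(\xi_i)\rangle u_g^\ast$ by Cauchy--Schwarz together with asymptotic centrality applied to $a=u_g^\ast$ --- is precisely the intended one. The reduction of the ``in particular'' statement to an isomorphism plus an inner perturbation is likewise handled correctly.
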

Another object used to formulate amenability-type conditions of actions
is positive definite functions; see e.g., \cite{AD}, \cite{AD02}, \cite{BO}, \cite{BEW}.
Recall that for a \Cs-dynamical system $\alpha \colon G \curvearrowright A$,
a function $f \colon G \rightarrow A$ is said to be \emph{positive definite} \cite{AD}
if it satisfies the following condition:
For any finite sequence $g_1, \ldots, g_n \in G$,
the matrix $(\alpha_{g_i}(f(g_i^{-1} g_j)))_{1\leq i, j \leq n} \in \mathbb{M}_n(A)$ is positive.
\subsection{Ultrapowers and central sequence algebras}\label{Sub:omega}
Throughout this article, we fix a free ultrafilter $\omega$ on $\mathbb{N}$.
For a \Cs-algebra $A$, the \emph{ultrapower} $A^\omega$ of $A$ with respect to $\omega$ is defined to be
\[ A^\omega := \ell^\infty(\mathbb{N}, A) / \left\{(x_n)_{n=1}^\infty \in  \ell^\infty(\mathbb{N}, A) : \lim_{n\rightarrow \omega} \| x_n\|=0\right\}.\]
[We alert that some authors (including \cite{Sz18}) use the symbol $A_\omega$ for the \Cs-algebraic ultrapower.
In this article, following e.g., \cite{IM}, \cite{IM2}, we employ the notation
$A^\omega$ which is standard at least in von Neumann algebra theory.]
For $(x_n)_{n=1}^\infty \in \ell^\infty(\mathbb{N}, A)$, denote by
$[x_n]_n^\omega$ its image in $A^\omega$.

We regard $A$ as a \Cs-subalgebra of $A^\omega$ via the embedding
$a \mapsto [(a, a, a, \ldots)]_n^\omega$.
The relative commutant \Cs-algebra
\[A_\omega:=A^\omega \cap A'\]
 is said to be the \emph{central sequence algebra} of $A$ (with respect to $\omega$).

For any discrete group action $\alpha \colon G \curvearrowright A$,
we have the actions
$\alpha ^\omega \colon G \curvearrowright A^\omega$, $\alpha_\omega \colon G \curvearrowright A_\omega$
inherited from the pointwise application of $\alpha$ to $\ell^\infty(\mathbb{N}, A)$.
When the underlying \Cs-algebra is obvious from the context, we
simply denote $\alpha^\omega$, $\alpha_\omega$, and their restrictions
to $G$-\Cs-subalgebras (typically relative commutant \Cs-subalgebras) by $\alpha$ for short.
\subsection{Exact groups}
Recall that a discrete group $G$ is said to be \emph{exact} \cite{KW}
if the reduced group \Cs-algebra $\rg(G)$ is exact.
We refer the reader to \cite{BO} and \cite{OzICM} for introduction and applications of this rich property.
Here we recall only a few things. First, it should be stressed that, comparing to amenable groups, the class of exact groups is fairly huge.
For instance the class includes all hyperbolic groups and linear groups.
Moreover the class is stable under
taking extensions, subgroups, increasing unions, amalgamated free products.
All the currently known non-exact groups are constructed
by a probabilistic method \cite{Gro}, \cite{Os}.
By Ozawa's theorem \cite{Oz},
exactness of a countable group is equivalent to
the existence of a (topologically) amenable action on a compact metrizable space (see also \cite{BO}, Theorem 5.1.7).
\subsection{(Strong) cocycle conjugacy}
Let $\alpha \colon G \curvearrowright A$ be a discrete group action on a unital \Cs-algebra.
A map $u \colon G \rightarrow \U(A)$ is said to be an \emph{$\alpha$-cocycle}
if it satisfies $u_{gh}=u_g \alpha_g(u_h)$ for all $g, h\in G$.
Each $\alpha$-cocycle $u$ defines
a new action $\alpha^u \colon G \curvearrowright A$, called the \emph{cocycle perturbation} of $\alpha$ by $u$,
by the formula $\alpha^u_g:= \ad(u_g) \circ \alpha_g$ for $g\in G$.
Two actions $\alpha \colon G \curvearrowright A$ and $\beta \colon G \curvearrowright B$ are said to be \emph{cocycle conjugate}
if $\alpha$ is conjugate to a cocycle perturbation of $\beta$,
that is, if there is a $\beta$-cocycle $w$ and an isomorphism $\varphi \colon B \rightarrow A$ satisfying
$\alpha_g = \varphi \circ \beta_g^w \circ \varphi^{-1}$ for all $g\in G$.
Clearly, if two actions $\alpha$ and $\beta$ are cocycle conjugate,
then $\alpha_\omega$ and $\beta_\omega$ are conjugate.

Two actions $\alpha$ and $\beta$ are said to be \emph{strongly cocycle conjugate}
if there is an isomorphism $\varphi \colon B \rightarrow A$, a $\beta$-cocycle $w$, and
a sequence $(u_n)_{n=1}^\infty$ in $\U(B)$
satisfying
\[\alpha_g = \varphi \circ \beta_g^w \circ \varphi^{-1} 
\quad {\rm and} \quad w_g = \lim_{n\rightarrow \infty} u_n \beta_g(u_n)^\ast
\qquad{\rm~for~ all~} g\in G.\]
For an advantage of strong cocycle conjugacy rather than just cocycle conjugacy,
see Lemma 4.2 of \cite{SzI} for instance.
Note that these two relations are reflexive, symmetric, and transitive.
\subsection{Saturated \Cs-dynamical systems}
The definition of saturated \Cs-dynamical systems
is introduced by Szab\'o \cite{Sz18} as an equivariant analogue of \cite{FH}.
Roughly speaking, this is an abstract formulation of some important properties
shared by ultraproduct actions and their restrictions to (relative) central sequence algebras. These actions are thus typical examples of saturated actions.
This concept allows us to considerably reduce the number of times to repeat \emph{standard reindexation arguments}.
We thus employ this concept in the article.
We refer the reader to Section 1 of \cite{Sz18} for
the definition and basic facts of saturated actions.
\subsection{$\mathcal{D}$-absorbing actions}
Let $\mathcal{D}$ be a strongly self-absorbing \Cs-algebra \cite{TW}.
(In this article we only consider the Cuntz algebra cases $\mathcal{D}=\Cn{2}, \Cn{\infty}$.)
Let $\alpha$ be an action of a discrete group $G$
on a unital \Cs-algebra $A$.
Recall that $\alpha$
is said to be \emph{equivariantly $\mathcal{D}$-absorbing}
if $\alpha$ is cocycle conjugate to $\id_{\mathcal{D}} \otimes \alpha \colon G \curvearrowright \mathcal{D} \otimes A$.
An equivariant McDuff-type theorem
(see \cite{IM}, Theorem 4.11 or \cite{SzI}, Corollary 3.8)
shows that $\alpha$ is equivariantly $\mathcal{D}$-absorbing
if and only if 
there is a unital embedding
$\mathcal{D} \rightarrow (A_\omega)^\alpha$.
Moreover, if this is the case, the theorem shows that $\alpha$
is in fact strongly cocycle conjugate to $\id_{\mathcal{D}} \otimes \alpha$.
\subsection{Notations}Here we fix basic notations used in this article.

Throughout the article, all \Cs-algebras are assumed to be nonzero. Let $A$, $B$ be unital \Cs-algebras. Let $X$ be a normed space. 
Throughout the article, $G$ denotes a countable discrete exact group.
\begin{itemize}
\item For $x, y \in X$ and $\epsilon>0$,
denote by $x \approx_\epsilon y$ if $\|x-y\|<\epsilon$.
\item For $x, y \in A$,
set $[x, y]:= xy -yx$.
\item Denote by $\U(A)$ the unitary group of $A$.
\item Denote by $1_A$ the unit of $A$.
When the considered \Cs-algebra $A$ is clear from the context,
denote $1_A$ by $1$ for short. 
\item For $u\in \U(A)$, $\ad(u)$ denotes the inner automorphism of $A$ defined by $u$;
$\ad(u)(x)=u x u^\ast$ for $x\in A$.
\item For a unitary representation $u \colon G \rightarrow \U(A)$,
$\ad(u)$ denotes the inner action $G \curvearrowright A$ given by
$g \mapsto \ad(u_g)$.
\item The symbols `$\otimes$', `$\rc$' denote the minimal tensor product
(of \Cs-algebras and completely positive maps)
and the reduced crossed product respectively.
\item For an action $\alpha \colon G \curvearrowright A$,
$A^\alpha$ denotes the fixed point algebra of $\alpha$;
\[A^\alpha:=\{a\in A: \alpha_g(a)=a {\rm~for~all~}g\in G\}.\]
\item For two actions $\alpha \colon G \curvearrowright A$ and $\beta \colon G \curvearrowright B$, denote by $\alpha \otimes \beta \colon G \curvearrowright A\otimes B$ the diagonal action; $(\alpha\otimes \beta)_g := \alpha_g \otimes \beta_g$ for $g\in G$.
\item For a subset $S \subset A$,
set $A \cap S' := \{ a\in A: [a, s]=0 {\rm~for~all~}s\in S\}$.
\item For $n\in \mathbb{N}$, denote by $\mathbb{M}_n(A)$
the \Cs-algebra of $n$-by-$n$ matrices over $A$.
When $A=\mathbb{C}$, we denote it by $\mathbb{M}_n$ for short.
We identify $\mathbb{M}_n(A)$ with $\mathbb{M}_n \otimes A$
via the canonical isomorphism.
\item For $n\in \mathbb{N}$ and $1\leq i, j \leq n$, denote by $e_{i, j} \in \mathbb{M}_n(A)$ the matrix whose $(i, j)$-entry is $1_A$ and the other entries are $0$. 
\item Let $\alpha \colon G \curvearrowright A$.
Let $B \subset A$ be a $G$-\Cs-subalgebra.
When there is no confusion, we denote
the restriction action $G \curvearrowright B$ by the same symbol $\alpha$.
\item Denote by $1_G$ the identity element of $G$.
\end{itemize}

\section{Proof of the Main Theorem}
The following key result is a generalization of \cite{Sz18}, Lemma 2.3.
Because F{\o}lner sets no longer exist in our case,
the statement involves technical assumptions (which may be regarded as a relative (weaker) version of the QAP).
In fact, the full statement of Lemma 2.3 fails for non-amenable groups;
see Remark \ref{Rem:fp}.
We will however see that the following statement is strong enough to obtain the Main Theorem.

Recall that $G$ denotes a countable exact group.
\begin{Lem}\label{Lem:fp}
Let $D$ be a unital \Cs-algebra.
Let $\alpha \colon G \curvearrowright D$ be a saturated action.
Let $A \subset D$ be a separable $G$-\Cs-subalgebra
satisfying the following conditions.
\begin{enumerate}[\upshape(1)]
\item The relative commutant $D \cap A'$ is purely infinite simple.
\item The restriction action $G \curvearrowright D \cap A'$ is pointwise outer.
\item 
For any finite subsets $S\subset A$, $F\subset G$ and any positive number $\epsilon>0$,
there is $\xi \in \ell^2(G, D)$ satisfying
\begin{itemize}
\item $\langle \xi, \tilde{\alpha}_g(\xi) \rangle \approx _\epsilon 1$ for all $g\in F$,
\item $a\xi \approx_\epsilon \xi a$ for all $a\in S$.
\end{itemize} 
\end{enumerate}
Then the fixed point algebra $(D \cap A') ^{\alpha}$ is purely infinite simple.
\end{Lem}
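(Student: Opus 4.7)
\emph{Plan.} I aim to establish pure infinite simplicity of $(D\cap A')^\alpha$ by verifying, for every pair of nonzero positive contractions $a, b\in (D\cap A')^\alpha$ and every $\varepsilon>0$, a Cuntz subequivalence estimate $\|s^*bs - a\|<\varepsilon$ with $s\in (D\cap A')^\alpha$. Saturation of $\alpha$ then lets one lift an approximately $\alpha$-fixed, approximately $A$-central candidate element from $D$ to an exact element of the fixed-point algebra. The construction of $s$ rests on three ingredients gathered in advance: (i) a dilating $t\in D\cap A'$ with $t^*bt=a$, afforded by pure infinite simplicity from assumption~(1); (ii) a Kishimoto-type positive contraction $c\in D\cap A'$ that approximately commutes with $a, b, t$, acts as an approximate unit on the relevant corners, and satisfies $\|c\alpha_g(c)\|<\delta$ for every $g$ in a prescribed finite set $F\subset G\setminus\{1_G\}$---available because $\alpha$ is pointwise outer on the purely infinite simple algebra $D\cap A'$ (assumptions (1) and (2)); and (iii) an almost-invariant, almost-central $\xi\in\ell^2(G,D)$ provided by hypothesis~(3) applied to $A\cup\{a,b,t,c\}$ and to an enlargement of $F$ containing $1_G$.

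With these data in hand, truncate $\xi$ to a finitely supported vector and define the candidate
\[
s \;:=\; \sum_{g} \xi_g\,\alpha_g(c\,t\,c) \;\in\; D.
\]
Approximate $A$-centrality of $s$ is immediate, since $[\xi_g, A]\approx 0$ and $\alpha_g(ctc)\in A'$ (as $\alpha$ preserves the $G$-C*-subalgebra $A$ and hence $A'$). Approximate $\alpha$-invariance follows from the identity $\alpha_h(s) - s = \sum_k(\tilde{\alpha}_h(\xi)_k - \xi_k)\,\alpha_k(ctc)$; a Cauchy--Schwarz estimate in the Hilbert C*-bimodule $\ell^2(G, D)$ bounds this by $\|\tilde{\alpha}_h(\xi)-\xi\|_{\ell^2}$ times a constant depending only on the truncation, and the former is small because $\|\tilde{\alpha}_h(\xi)-\xi\|_{\ell^2}^2 \approx 2 - 2\,\mathrm{Re}\,\langle\xi,\tilde{\alpha}_h\xi\rangle \approx 0$ by hypothesis~(3) (including the case $h=1_G$, which gives $\langle\xi,\xi\rangle\approx 1$).

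The heart of the argument is the evaluation of $s^*bs$. Expanding,
\[
s^*bs \;=\; \sum_{h,k} \alpha_h(ct^*c)\,\xi_h^*\,b\,\xi_k\,\alpha_k(ctc);
\]
pushing $b$ past $\xi_h^*$ and $\xi_k$ by approximate centrality, the off-diagonal terms $h\neq k$ pick up the factor $\alpha_h(c)\cdot\alpha_k(c) = \alpha_h\!\bigl(c\,\alpha_{h^{-1}k}(c)\bigr)$, which is small by the Kishimoto condition once $F$ contains the finitely many nontrivial $h^{-1}k$. The diagonal $h=k$ terms collapse, via approximate $\alpha$-invariance and the approximate unit property of $c$ on the corner, to $\langle\xi,\xi\rangle\cdot t^*bt = \langle\xi,\xi\rangle\cdot a \approx a$. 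Applying saturation to the resulting approximately $(A,\alpha)$-central element with $\|s^*bs-a\|$ small yields an honest $s\in (D\cap A')^\alpha$ with the required estimate, establishing Cuntz subequivalence in the fixed-point algebra.

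The main obstacle is precisely the diagonal/off-diagonal dichotomy above: because the invariance of $\xi$ is controlled only by the inner-product quantity $\langle\xi,\tilde{\alpha}_g\xi\rangle$ rather than by the componentwise F{\o}lner bound that would be available for amenable $G$, one must couple this weaker averaging with Kishimoto orthogonality inside the relative commutant and with the $A$-centrality of $\xi$ in the outer algebra $D$. Coordinating the parameter choices---the size of the support of $\xi$, the Kishimoto error $\delta$, the finite set $F$, and the tolerance in hypothesis~(3)---in the correct order, so that all three mechanisms cooperate and the surviving diagonal genuinely reconstructs $a$, is the technical content that this lemma contributes beyond the amenable argument of \cite{Sz18}.
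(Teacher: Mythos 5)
Your overall strategy---averaging a comparison element over the group against the almost-invariant vector $\xi$ supplied by hypothesis (3), killing off-diagonal terms by an orthogonality device coming from pointwise outerness, and then invoking saturation to pass to an exact element of the fixed point algebra---is the same as the paper's. However, the execution has a genuine gap. You apply hypothesis (3) to the set $A\cup\{a,b,t,c\}$ and later ``push $b$ past $\xi_h^*$ and $\xi_k$ by approximate centrality.'' Hypothesis (3) only allows finite subsets $S\subset A$; the elements $a$, $b$, $t$, $c$ live in the relative commutant $D\cap A'$ and are in general not in $A$, so nothing forces $\xi$ to approximately commute with them. (In the intended application, Lemma \ref{Lem:cob}, $\xi$ is chosen with values in the copy $1_{\mathbb{M}_2}\otimes B$ of $\Cn{2}$, while the comparison elements live in $\mathbb{M}_2(C_\omega)$; no approximate commutation between the two is available, and building it into hypothesis (3) would make the lemma unusable.) The paper's proof is designed precisely to avoid this: Proposition 2.2 of \cite{Sz18} supplies an \emph{exact} projection $p\in D\cap A'$ with $\alpha_g(p)p=0$ for all $g\neq 1_G$ and, crucially, $a\alpha_g(p)=\alpha_g(p)$ for all $g$. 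Setting $v_{S,F}=\sum_g\alpha_g(pt)\xi_g$ (with $t^*pt=1$) then gives $av_{S,F}=v_{S,F}$ and $v_{S,F}^*v_{S,F}=\langle\xi,\xi\rangle=1$ exactly, so the element $a$ never has to be moved past any $\xi_g$; one then only needs the approximate $A$-centrality and approximate $G$-invariance of $v_{S,F}$, which do follow from hypothesis (3). You should replace your two-sided criterion $\|s^*bs-a\|<\varepsilon$ by the one-sided criterion ``for every positive norm-one $a\in(D\cap A')^\alpha$ there is a proper isometry $v$ in the fixed point algebra with $v^*av=1$,'' which is what this construction delivers.

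A second, related problem is that your parameter ordering is circular. The Kishimoto element $c$ must satisfy $\|c\alpha_g(c)\|<\delta$ for the nontrivial $g=h^{-1}k$ with $h,k\in\supp(\xi)$, so $c$ must be chosen after $\xi$; but $\xi$ is to approximately commute with $c$ via hypothesis (3), so $\xi$ must be chosen after $c$. Hypothesis (3) gives no control on $\supp(\xi)$ in terms of the input finite set $F\subset G$, so the loop cannot be closed in the order you describe. The exact projection of Proposition 2.2 of \cite{Sz18}---orthogonal to all of its nontrivial translates simultaneously, which is available because $\alpha$ is saturated---eliminates both the need for the Kishimoto element and this circularity.
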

\begin{proof}
Let $a\in (D \cap A') ^{\alpha}$ be a positive element of norm one.
We will find, following Lemma 2.3 of \cite{Sz18},
a proper isometry $v\in  (D \cap A') ^{\alpha}$ 
satisfying $v^\ast a v =1$.

By Proposition 2.2 of \cite{Sz18},
we have a nonzero projection $p \in D \cap A'$ satisfying
\begin{itemize}
\item
$\alpha_g(p)  p=0$ for all $g\in G \setminus \{1_G\}$,
\item
$a\alpha_g(p)=\alpha_g(p)$ for all $g\in G$.
\end{itemize}
By replacing $p$ by its proper subprojection (which exists by (1)) if necessary,
we may assume in addition that there is a nonzero projection $q \in D \cap A'$ with
$q \alpha_g(p)=0$ for all $g\in G$.
Take an isometry $t \in D \cap A'$ with
$t^\ast p t=1$.

Let finite subsets $S \subset A$, $F \subset G$ and $\epsilon>0$ be given.
Choose $\xi \in \ell^2(G, D)$ as in (3) for the $S$, $F$, $\epsilon$.
By standard perturbation arguments, we may assume in addition that
$\xi$ is finitely supported and satisfies $\langle \xi, \xi \rangle =1$.
Set \[v_{S, F}:= \sum_{g\in G} \alpha_g(pt)\xi_g \in D.\]
Then direct computations imply
\[v_{S, F}^\ast v_{S, F}= \sum_{g\in G} \xi_g ^\ast \xi_g=1.\]
Observe also that $qv_{S, F} = 0$.
Hence $v_{S, F}$ is a proper isometry.
It is clear from the choice of $p$ that $av_{S, F}=v_{S, F}$.
Thus $v_{S, F}^\ast a v_{S, F}=1$.

We show that $v_{S, F}$ almost commutes with $S$
and is almost invariant under $F$
(the precise meanings are self-explaining below).
For $x\in S$, as $pt\in D \cap A'$,
we have
\[[x, v_{S, F}] =\sum_{g\in G} \alpha_g(pt) (x\xi_g - \xi_g x).\]
Hence
\[\|[x, v_{S, F}]\|^2=\|[x, v_{S, F}]^\ast [x, v_{S, F}]\|= \|\sum_{g\in G} (x\xi_g - \xi_g x)^\ast (x\xi_g - \xi_g x)\| =\| x\xi - \xi x\|^2 < \epsilon^2.\]
For $s\in F$,
we have
\[\alpha_s(v_{S, F})= \sum_{g\in G} \alpha_{sg}(pt) \alpha_s(\xi_g)
= \sum_{g\in G} \alpha_g(pt) \alpha_s(\xi_{s^{-1}g})
= \sum_{g\in G} \alpha_g(pt) \tilde{\alpha}_s(\xi)_g.\]
Therefore computations similar to the previous one yield
\[ \|v_{S, F}- \alpha_s(v_{S, F})\| = \| \xi - \tilde{\alpha}_s(\xi)\|<\sqrt{2\epsilon}.\]

Now, since $\alpha$ is saturated, $G$ is countable, and $A$ is separable, this guarantees the existence of a proper isometry $v \in (D\cap A')^\alpha$ with $v^\ast a v=1$.
\end{proof}
\begin{Rem}
We remark that the proof of Lemma 2.1 in \cite{Sz18}, which Proposition 2.2 of \cite{Sz18} (thus 
Lemma \ref{Lem:fp}) depends on, needs a slight correction.
To explain this, let us use the same notations as in \cite{Sz18}, Lemma 2.1.
The problem is that, in the proof, the condition $qa=q$ is not confirmed. (This does not follow from $q\in \overline{aAa}$.)
However this is easily fixed as follows.
Since $D$ is saturated, by standard applications of functional calculus,
one can find a positive element $a_0$ in $D$
with $a_0 a= a_0$, $\|a_0 \| =1$.
Applying the proof of \cite{Sz18}, Lemma 2.1 to $a_0$ instead of $a$,
we obtain the desired $q$ (for the original $a$).
\end{Rem}
We next show a variant of Lemmas 2.4 and 2.9 in \cite{Sz18}.
Because of the restrictions in Lemma \ref{Lem:fp},
we need to choose all objects more carefully.

\begin{Lem}\label{Lem:cob}
Let $u \colon G \rightarrow \mathcal{U}(\Cn{2})$
be a unitary representation.
Let $\alpha \colon G \curvearrowright \Cn{2}$
be an equivariantly $\Cn{2}$-absorbing, pointwise outer, QAP action.
Then there exist a unital $\ast$-homomorphism
$\nu \colon \Cn{2} \rightarrow ((\Cn{2})_\omega)^\alpha$ and $w\in \mathcal{U}( (\Cn{2})_\omega)$
satisfying $\nu(u_g)= w \alpha_g(w^\ast)$ for all $g\in G$.
\end{Lem}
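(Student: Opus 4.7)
The plan is to carry out Connes's $2\times 2$-matrix trick in the ultrapower, using Lemma \ref{Lem:fp} to replace the F{\o}lner set arguments that would be used in the amenable case. First, since $\alpha$ is equivariantly $\Cn{2}$-absorbing, the equivariant McDuff-type theorem recalled in Section 2.7 supplies a unital $\ast$-homomorphism $\nu \colon \Cn{2} \to ((\Cn{2})_\omega)^\alpha$; this will be the $\nu$ of the statement. Because $\nu$ lands in the fixed point algebra and is multiplicative, the map $g \mapsto \nu(u_g)$ is automatically an $\alpha_\omega$-cocycle in $\mathcal{U}((\Cn{2})_\omega)$, and it remains to produce $w \in \mathcal{U}((\Cn{2})_\omega)$ realising it as a coboundary.

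To implement the matrix trick, I set $B := \mathbb{M}_2 \otimes (\Cn{2})_\omega$, define the $(\id_{\mathbb{M}_2} \otimes \alpha_\omega)$-cocycle $V_g := e_{11} \otimes 1 + e_{22} \otimes \nu(u_g)$, and consider $\gamma := (\id_{\mathbb{M}_2} \otimes \alpha_\omega)^V$. The diagonal matrix units $e_{11}$ and $e_{22}$ are fixed by $\gamma$, and a direct computation shows that for $s = e_{12} \otimes s_0 \in B$ the invariance condition $\gamma_g(s) = s$ unpacks to $\nu(u_g) = s_0^\ast \alpha_g(s_0)$. Thus it suffices to produce a partial isometry $s \in B^\gamma$ with $s^\ast s = e_{22}$ and $s s^\ast = e_{11}$; setting $w := s_0^\ast$ will then yield $\nu(u_g) = w \alpha_g(w^\ast)$.

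To locate such a partial isometry I apply Lemma \ref{Lem:fp} with $D = B$, $A = \mathbb{C}\cdot 1_B$, and action $\gamma$. Saturation of $\gamma$ follows from that of $\alpha_\omega$ on $(\Cn{2})_\omega$, together with stability of saturation under tensoring with a finite-dimensional trivial factor and under cocycle perturbation. Condition (1) is immediate since $(\Cn{2})_\omega$ is purely infinite simple as the central sequence algebra of a Kirchberg algebra, and tensoring with $\mathbb{M}_2$ preserves this. Condition (2) holds because pointwise outerness of $\alpha$ passes to $\alpha_\omega$ on $(\Cn{2})_\omega$ by a standard argument, and then through $\id_{\mathbb{M}_2} \otimes -$ and cocycle perturbation. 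For Condition (3), with $A = \mathbb{C}$ the commutation requirement is vacuous, and the remaining F{\o}lner-like clause is supplied by the QAP of $\gamma$; this QAP comes from the QAP of $\alpha_\omega$, which is exactly the implication (1)$\Rightarrow$(5) of Theorem \ref{Thmint:QAP}, after tensoring with $\id_{\mathbb{M}_2}$ and cocycle-perturbing.

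Once Lemma \ref{Lem:fp} gives that $B^\gamma$ is purely infinite simple, Cuntz's theorem produces a partial isometry $s \in B^\gamma$ equating the two diagonal units, which yields the desired $s_0$ and hence $w$. The hardest step to justify is really Condition (3): without amenability one cannot rely on genuine F{\o}lner sequences for $G$, so the whole strategy hinges on the fact that the QAP of $\alpha$ descends to the central sequence algebra, i.e., on Theorem \ref{Thmint:QAP}.
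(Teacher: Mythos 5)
There are two genuine gaps here, and both occur at exactly the points where the paper has to work hardest.

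First, your argument is circular. You verify condition (3) of Lemma \ref{Lem:fp} by appealing to the QAP of $\alpha_\omega$ on $(\Cn{2})_\omega$, justified by the implication (1)$\Rightarrow$(5) of Theorem \ref{Thmint:QAP}. But in the paper that implication is established only after Lemma \ref{Lem:cob}: it is proved via (1)$\Rightarrow$(3), which rests on Proposition \ref{Prop:QAP}, which rests on Corollary \ref{Cor:emb}, which rests on Lemma \ref{Lem:cob} itself. Nor is the dependence easily removable: passing QAP vectors from $A$ to $A_\omega$ is precisely the non-obvious step (an $\omega$-diagonal of finitely supported vectors need not define an element of $\ell^2(G,A_\omega)$, since the supports can spread), and the paper explicitly warns that conditions (2)--(6) are not obvious even for the model actions. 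The paper's proof of Lemma \ref{Lem:cob} is designed to sidestep this: one writes $\alpha$, up to cocycle conjugacy, as $\id_{\Cn{2}}\otimes\alpha_0$ on $C=\Cn{2}\otimes\Cn{2}$, works in the saturated system $\mathbb{M}_2(C^\omega)$ relative to $A:=1_{\mathbb{M}_2}\otimes C$ (so that $D\cap A'=\mathbb{M}_2(C_\omega)$), and chooses $\xi$ with values in the honest subalgebra $1_{\mathbb{M}_2}\otimes(1\otimes\Cn{2})$ using only the QAP of $\alpha_0$ on $\Cn{2}$. Those values commute exactly with $\Cn{2}\otimes 1$ and with the perturbing unitaries $e_{1,1}\otimes 1+e_{2,2}\otimes\nu(u_g)\in\mathbb{M}_2((\Cn{2}\otimes 1)_\omega)$, which is what makes condition (3) checkable with no ultrapower version of the QAP.

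Second, the final step fails as stated. Pure infiniteness and simplicity of $B^\gamma$ do not by themselves yield $s$ with $s^\ast s=e_{2,2}$ and $ss^\ast=e_{1,1}$: Cuntz's theorem requires in addition that $[e_{1,1}]_0=[e_{2,2}]_0$ in $K_0(B^\gamma)$. Your $\nu$ is an arbitrary unital copy of $\Cn{2}$ in $((\Cn{2})_\omega)^\alpha$ coming from the McDuff-type theorem and gives no control over these classes. The paper instead chooses $\nu$ together with a second unital copy $\mu$ of $\Cn{2}$ commuting with $\nu$ inside the fixed point algebra (possible because the first tensor factor of $C$ is acted on trivially), so that $1_{\mathbb{M}_2}\otimes\mu(\Cn{2})$ is a unital copy of $\Cn{2}$ in $D^\beta\cap\{e_{1,1},e_{2,2}\}'$, forcing $[e_{1,1}]_0=[e_{2,2}]_0=0$. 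Remark \ref{Rem:fp} makes explicit that this $K_0$ computation is carried out separately from, and independently of, the simplicity of the fixed point algebra; omitting it leaves a real hole.
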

\begin{proof}
Since a cocycle conjugation induces the conjugation
of the actions on the central sequence algebras,
we may assume that $\alpha$ is of the form
\[\id_{\Cn{2}} \otimes \alpha_0 \colon G \curvearrowright \Cn{2} \otimes \Cn{2}\] for some QAP action $\alpha_0$.
Put $A:=\Cn{2} \otimes 1_{\Cn{2}}$, $B := 1_{\Cn{2}}\otimes \Cn{2}$, $C:=\Cn{2} \otimes \Cn{2}$.
Take two unital $\ast$-homomorphisms
$\nu, \mu \colon \Cn{2} \rightarrow A_\omega$
satisfying $[\nu(x), \mu(y)]=0$ for all $x, y\in \Cn{2}$ (\cite{Ror}).
Regarding $\nu$ as a map into $(C_\omega)^\alpha$, we will show that this $\nu$ satisfies the desired condition.

Define
$\beta \colon G \curvearrowright \mathbb{M}_2(C^\omega)$
to be
\[\beta_g := \ad \left(
    \begin{array}{cc}
      1 & 0  \\
      0 & \nu(u_g) 
    \end{array}
  \right) \circ (\id_{\mathbb{M}_2} \otimes \alpha_g) \qquad{\rm~for~}g\in G.\]
Note that
$\beta$ is saturated by Corollary 1.10 and Proposition 1.12 in \cite{Sz18}.
Set \[D:=\mathbb{M}_2(C_\omega) \subset \mathbb{M}_2(C^\omega).\]
Since $\mu(\Cn{2}) \subset (C_\omega)^\alpha \cap \nu(\Cn{2})'$, we have a unital inclusion
\[1_{\mathbb{M}_2} \otimes \mu(\Cn{2}) \subset D^\beta \cap \{e_{1, 1}, e_{2, 2}\}'.\]
This shows
\[[e_{1, 1}]_0 = [e_{2, 2}]_0 =0 \qquad{\rm~in~}K_0(D^\beta).\]

We next show that, with respect to $\beta$,
the inclusion $1_{\mathbb{M}_2} \otimes C \subset \mathbb{M}_2(C^\omega)$
fulfills the assumptions of Lemma \ref{Lem:fp}.
Since $D= \mathbb{M}_2(C^\omega) \cap(1_{\mathbb{M}_2} \otimes C)' $,
this claim together with Lemma \ref{Lem:fp}
shows that $D^\beta$ is purely infinite simple.
Condition (1) follows from Proposition 3.4 of \cite{KP}.
Condition (2) follows from
Theorem 3.1 of \cite{Sz18} (cf.~\cite{Na}).
To show condition (3), fix finite subsets $T \subset  1_{\mathbb{M}_2}\otimes B$,
 $F \subset G$ and a positive number $\epsilon>0$.
Then, as $\alpha_0$ has the QAP,
one can take $\xi \in \ell^2(G, 1_{\mathbb{M}_2} \otimes B) \subset \ell^2(G, \mathbb{M}_2(C^\omega))$
satisfying
\begin{itemize}
\item $\langle \xi, (\widetilde{\id_{\mathbb{M}_2} \otimes \alpha})_g(\xi) \rangle \approx _\epsilon 1$ for all $g\in F$,
\item $b\xi \approx_\epsilon \xi b$ for all $b\in T$.
\end{itemize}
Observe that, for any $g\in G$, as the values of $\xi$ commute with $\left(
    \begin{array}{cc}
      1 & 0  \\
      0 & \nu(u_g) 
    \end{array}
  \right)\in \mathbb{M}_2(A_\omega)$, we have
$(\widetilde{\id_{\mathbb{M}_2} \otimes \alpha})_g(\xi) =\tilde{\beta}_g(\xi)$.
Hence $\langle \xi, \tilde{\beta}_g(\xi) \rangle \approx _\epsilon 1$ for all $g\in F$.
We also note that $a \xi = \xi a$ for all $a\in 1_{\mathbb{M}_2} \otimes A$.
As $(1_{\mathbb{M}_2} \otimes A) \cup( 1_{\mathbb{M}_2} \otimes B)$ generates $1_{\mathbb{M}_2} \otimes C$ as a \Cs-algebra, this proves condition (3) of the inclusion.

Hence, by Lemma \ref{Lem:fp}, $D^\beta$ is purely infinite simple.
Therefore, by \cite{Cun}, one can take a partial isometry
$v\in D^\beta$ satisfying
$vv^\ast =e_{1, 1}, v^\ast v =e_{2, 2}$.
Certainly this element is of the form
$v= e_{1, 2} \otimes w$
for some $w\in \mathcal{U}(C_\omega)$.
Since $v$ is $\beta$-invariant,
direct computations (cf.~\cite{Sz18}, Lemma 2.4) show that
$\nu(u_g)= w \alpha_g(w)^\ast$ for all $g\in G$.
We thus obtain the desired element $w$.
\end{proof}

\begin{Cor}\label{Cor:emb}
Let $\alpha \colon G \curvearrowright \Cn{2}$
be an equivariantly $\Cn{2}$-absorbing, pointwise outer, QAP action.
Then for any unitary representation $u \colon G \rightarrow \mathcal{U}(\Cn{2})$,
there is a unital $G$-equivariant embedding
$(\Cn{2}, \ad(u)) \rightarrow ((\Cn{2})_\omega, \alpha_\omega)$.

\end{Cor}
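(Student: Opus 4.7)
The plan is to realize the desired embedding as a conjugation of the $\ast$-homomorphism $\nu$ produced by Lemma~\ref{Lem:cob}. Given a unitary representation $u \colon G \to \U(\Cn{2})$, Lemma~\ref{Lem:cob} supplies a unital $\ast$-homomorphism $\nu \colon \Cn{2} \to ((\Cn{2})_\omega)^\alpha$ together with a unitary $w \in \U((\Cn{2})_\omega)$ satisfying $\nu(u_g) = w \alpha_g(w^\ast)$ for every $g \in G$. I would define
\[
\varphi \colon \Cn{2} \to (\Cn{2})_\omega, \qquad \varphi(x) := w^\ast \nu(x) w,
\]
and claim that this is the desired unital $G$-equivariant embedding.

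Unitality is immediate from that of $\nu$, and injectivity follows from the simplicity of $\Cn{2}$. For equivariance, since $\nu(x)$ lies in the fixed point algebra of $\alpha_\omega$, one computes
\[
\alpha_g(\varphi(x)) = \alpha_g(w^\ast)\, \nu(x)\, \alpha_g(w).
\]
The identity $\nu(u_g) = w \alpha_g(w^\ast)$ can be rearranged as $\alpha_g(w^\ast) = w^\ast \nu(u_g)$ and, by taking adjoints, $\alpha_g(w) = \nu(u_g)^\ast w$. Substituting gives
\[
\alpha_g(\varphi(x)) = w^\ast \nu(u_g) \nu(x) \nu(u_g)^\ast w = w^\ast \nu(u_g x u_g^\ast) w = \varphi(\ad(u_g)(x)),
\]
which is precisely the equivariance of $\varphi$ with respect to $\ad(u)$ on the domain and $\alpha_\omega$ on the codomain.

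There is no serious obstacle at this step: all the substantive work has been absorbed into Lemma~\ref{Lem:cob} (and, through it, into Lemma~\ref{Lem:fp}). Conceptually, the unitary $w$ furnishes a coboundary identifying the inner action $\ad(u)$, transported into $(\Cn{2})_\omega$ via $\nu$, with the restriction of $\alpha_\omega$ to the \Cs-subalgebra $w^\ast \nu(\Cn{2}) w$, and the corollary is essentially a reformulation of this identification.
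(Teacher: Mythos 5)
Your proposal is correct and is exactly the paper's argument: the paper also sets $\mu:=\ad(w^\ast)\circ\nu$ with $\nu$ and $w$ from Lemma~\ref{Lem:cob} and cites the same direct computation for equivariance, which you have simply written out in full. Nothing further is needed.
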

\begin{proof}
By Lemma \ref{Lem:cob}, we have a unital $\ast$-homomorphism $\nu \colon \Cn{2} \rightarrow ((\Cn{2})_\omega)^\alpha$ and $w\in \mathcal{U}((\Cn{2})_\omega)$
satisfying
$w \alpha_g(w)^\ast= \nu(u_g)$ for all $g\in G$.
Set $\mu:= \ad(w^\ast)\circ \nu$.
Then direct computations (cf.~\cite{Sz18}, Lemma 2.9) show that
$\mu$ gives the desired embedding. 
\end{proof}
We now prove Theorem \ref{Thmint:QAP}.
Before the proof, we record the following universality-like property of QAP actions, which is of independent interest.
\begin{Prop}\label{Prop:QAP}
Let $\alpha \colon G \curvearrowright A$ be a QAP action on a unital simple separable nuclear \Cs-algebra.
Then for any action $\beta \colon G \curvearrowright B$ on a unital separable exact \Cs-algebra,
there is a $G$-equivariant unital completely positive map
from $B$ into $A_\omega$.
Moreover, when we additionally assume that $\alpha$ is pointwise outer and equivariantly $\Cn{2}$-absorbing,
the stated maps can be taken to be an embedding.
\end{Prop}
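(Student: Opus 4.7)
The plan is to obtain the ucp map as a composition $B \to \ell^\infty(G) \to A_\omega$, and, in the moreover case, upgrade it to an embedding by combining Kirchberg's $\Cn{2}$-embedding theorem with an adaptation of Corollary~\ref{Cor:emb} to our general $A$.

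For the ucp claim, fix any state $\rho$ on $B$ and set $\iota(b)(g) := \rho(\beta_{g^{-1}}(b))$; a direct check shows $\iota \colon B \to \ell^\infty(G)$ is ucp and equivariant from $(B, \beta)$ to $(\ell^\infty(G), \sigma)$, where $\sigma$ is the left shift (note that exactness of $B$ plays no role here). To construct an equivariant ucp $\Phi \colon (\ell^\infty(G), \sigma) \to (A_\omega, \alpha_\omega)$, take QAP vectors $\xi_n \in \ell^2(G, A)$ with $\langle \xi_n, \xi_n\rangle = 1$, $\langle \xi_n, \tilde{\alpha}_g(\xi_n)\rangle \to 1$ for each $g$, and $\|[a, \xi_n]\|_{\ell^2} \to 0$ for each $a \in A$, finitely supported without loss of generality. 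Then $\phi_n(f) := \sum_g f(g)\, \xi_n(g)^* \xi_n(g)$ defines ucp maps $\ell^\infty(G) \to A$; Cauchy--Schwarz in the Hilbert $A$-module $\ell^2(G, A)$ applied to the reweighted vectors $\eta_n^f(g) := \sqrt{f(g)}\,\xi_n(g)$ (for $0 \le f \le 1$, extended to arbitrary $f$ by linearity) gives $\alpha_h\phi_n(f) - \phi_n(\sigma_h f) \to 0$ and $[a, \phi_n(f)] \to 0$. The $\omega$-limit then produces $\Phi$, and $\Phi \circ \iota$ is the desired map.

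For the moreover part, since $G$ and $B$ are both exact, $B \rc G$ is exact, and Kirchberg's $\Cn{2}$-embedding theorem supplies a unital embedding $B \rc G \hookrightarrow \Cn{2}$. Restricting to $B$ and to the canonical unitaries yields an equivariant embedding $(B, \beta) \hookrightarrow (\Cn{2}, \ad(u))$ for some unitary representation $u \colon G \to \U(\Cn{2})$. It then suffices to produce an equivariant unital embedding $(\Cn{2}, \ad(u)) \hookrightarrow (A_\omega, \alpha_\omega)$. For this, I would repeat the proofs of Lemma~\ref{Lem:cob} and Corollary~\ref{Cor:emb} with $A$ in place of the ambient $\Cn{2}$: using equivariant $\Cn{2}$-absorption we may arrange $\alpha = \alpha \otimes \id_{\Cn{2}}$ on $A \otimes \Cn{2}$ and obtain two commuting unital embeddings $\nu, \mu \colon \Cn{2} \to (A_\omega)^\alpha$; the matrix-trick action $\tilde{\beta}_g := \ad(\mathrm{diag}(1, \nu(u_g))) \circ (\id_{\mathbb{M}_2}\otimes \alpha)$ on $\mathbb{M}_2(A^\omega)$ is saturated; and the hypotheses of Lemma~\ref{Lem:fp} for $1_{\mathbb{M}_2}\otimes A \subset \mathbb{M}_2(A^\omega)$ hold by pointwise outerness, purely infinite simplicity of $A_\omega$ (a consequence of $\Cn{2}$-absorption), and the QAP vectors $\xi_n$. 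The resulting proper isometry in the $\tilde{\beta}$-fixed-point algebra encodes a unitary $w \in \U(A_\omega)$ with $\nu(u_g) = w\alpha_g(w)^*$, after which $\ad(w^*)\circ \nu$ gives the required embedding; composition with the first embedding completes the proof. The main obstacle is verifying that the structural properties used in Lemma~\ref{Lem:cob} (purely infinite simplicity of the relevant relative commutants and simplicity of their fixed-point algebras) carry over from $\Cn{2}$ to general $\Cn{2}$-absorbing $A$, though this is routine once the commuting $\Cn{2}$-copies inside $(A_\omega)^\alpha$ are located.
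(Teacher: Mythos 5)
Your argument is correct, but the two halves relate to the paper differently. For the \emph{moreover} part you follow essentially the paper's route: exactness of $B$ and $G$ gives a unital embedding $B \rc G \hookrightarrow \Cn{2}$, reducing everything to $(\Cn{2}, \ad(u))$, and the equivariant embedding $(\Cn{2},\ad(u)) \hookrightarrow (A_\omega,\alpha_\omega)$ is exactly Corollary \ref{Cor:emb}. Your stated ``main obstacle'' (transferring the structural properties from $\Cn{2}$ to a general $\Cn{2}$-absorbing $A$) is in fact vacuous: under the moreover hypotheses $A \cong A \otimes \Cn{2} \cong \Cn{2}$ by the non-equivariant Kirchberg theorem, and $\alpha$ is cocycle conjugate to an action on $\Cn{2}$ satisfying the hypotheses of Corollary \ref{Cor:emb}, so no re-proof of Lemma \ref{Lem:cob} is needed. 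For the first (ucp) statement, however, your route is genuinely different from the paper's. The paper also reduces to $(\Cn{2},\ad(u))$ and then takes an auxiliary equivariantly $\Cn{2}$-absorbing, pointwise outer action $\sigma$ on $\Cn{2}$ with an invariant state $\varphi$, applies Corollary \ref{Cor:emb} to $\alpha\otimes\sigma$ on $A\otimes\Cn{2}\cong\Cn{2}$, and slices by $\id_A\otimes\varphi$; this invokes the full matrix-trick machinery of Lemmas \ref{Lem:fp} and \ref{Lem:cob}. You instead factor through $(\ell^\infty(G),\sigma)$ via $b\mapsto(\rho(\beta_{g^{-1}}(b)))_g$ and build the equivariant ucp map $\ell^\infty(G)\to A_\omega$ directly from the QAP vectors by $f\mapsto\sum_g f(g)\xi_n(g)^*\xi_n(g)$; the Cauchy--Schwarz estimates you indicate do verify asymptotic equivariance and quasi-centrality, and since the $\phi_n$ are uniformly contractive the $\omega$-limit is an exactly equivariant ucp map into $A_\omega$ with no separability issue. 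This is more elementary and more general: it uses only the QAP of $\alpha$ (plus separability of $A$ and countability of $G$ to extract a sequence from the net), and needs neither exactness of $B$ nor simplicity or nuclearity of $A$, at the price of producing a map that factors through a commutative algebra --- which is all the statement (and its application in the proof of Theorem \ref{Thmint:QAP}) requires.
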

\begin{proof}
By the observation in the first paragraph of the proof of Proposition 5.3 in \cite{Sz18},
we only need to consider the case $(B, \beta)= (\Cn{2}, \ad(u))$
for unitary representations $u\colon G \rightarrow \mathcal{U}(\Cn{2})$.
(In \cite{Sz18} the statement is restricted to amenable groups, but the same proof works for exact groups after employing the reduced \Cs-completion instead of the full one; see ~\cite{BO}, Theorem 10.2.9.)

The last statement follows from this observation and Corollary \ref{Cor:emb}

Let $u\colon G \rightarrow \mathcal{U}(\Cn{2})$ be a unitary representation.
Take an equivariantly $\Cn{2}$-absorbing, pointwise outer action
$\sigma \colon G \curvearrowright \Cn{2}$ which admits an invariant state $\varphi$.
(For instance, take the diagonal action of the
trivial action on $\Cn{2}$ and the Bernoulli shift action $G \curvearrowright \bigotimes_G \Cn{2}$.)
Then, since $A \otimes \Cn{2} \cong \Cn{2}$, the diagonal action $\alpha \otimes \sigma$
satisfies the assumptions of Corollary \ref{Cor:emb}.
Therefore we have
a unital $G$-equivariant embedding
\[\iota \colon (\Cn{2}, \ad(u)) \rightarrow ((A \otimes \Cn{2})_\omega, \alpha \otimes \sigma).\]
As $\varphi$ is $\sigma$-invariant, the map
$\id_{A} \otimes \varphi \colon A \otimes \Cn{2} \rightarrow A$
induces a $G$-equivariant unital completely positive map
\[\Phi \colon ((A \otimes \Cn{2})_\omega, \alpha \otimes \sigma) \rightarrow (A_\omega, \alpha);\quad [x_n]_n^\omega \mapsto [(\id_A \otimes \varphi)(x_n)]_n^\omega.\]
The composite $\Phi \circ \iota$ gives the desired map.
\end{proof}
\begin{proof}[Proof of Theorem \ref{Thmint:QAP}]
Clearly (6) implies (4) and (5). 
By Proposition 2.5 of \cite{AD}, (3) implies (2).
We show that (1) implies (3),  (2) implies (6), (4) implies (1), and (5) implies (1), which complete the proof.

(1)$\Rightarrow$ (3):
Assume that $\alpha$ has the QAP.
Take an amenable action $\eta \colon G \curvearrowright X$
on a compact metrizable space (\cite{Oz}, \cite{BO}, Theorem 5.1.7).
By Proposition \ref{Prop:QAP},
we have a $G$-equivariant unital completely positive
map $\Phi \colon C(X) \rightarrow A_\omega$.
Since $\eta$ is amenable, by \cite{BO}, Theorem 4.4.3,
there is a sequence $(f_n \colon G \rightarrow C(X))$
of finitely supported positive definite functions
with $\lim_{n\rightarrow \infty} f_n(g) =1$ for all $g\in G$.
Since $\Phi$ is completely positive and $G$-equivariant,
the functions $\Phi \circ f_n \colon G \rightarrow A_\omega$
are positive definite.
As $\Phi$ is unital and continuous, we have 
\[\lim_{n \rightarrow \infty}(\Phi \circ f_n)(g)=1 \qquad{\rm~ for~ all~} g\in G.\]
This confirms condition (3) of $\alpha$.

(2)$\Rightarrow$ (6):
Assume that $\alpha$ satisfies condition (2).
Let $B \subset A^\omega$ be a separable $G$-\Cs-subalgebra.
Take any finite subset $F\subset G$ and finite sequence $a(1), \ldots, a(k) \in A^\omega$.
Fix a positive number $\epsilon>0$.
Choose a dense sequence $(b(n))_{n=1}^\infty$ of $B$.

For each $i=1, \ldots, k$, take a representing sequence $(a(i)_n)_{n\in \mathbb{N}}$
of $a(i)$. 
We also fix a  representing sequence $(b(i)_n)_{n\in \mathbb{N}}$
of $b(i)$ for each $i\in \mathbb{N}$. 
Choose $\xi  \in \ell^2(G, A_\omega)$
satisfying $\langle \xi, \tilde{\alpha}_g(\xi)\rangle \approx_{\epsilon} 1$ for all $g\in F$.
By a standard perturbation argument, we may assume in addition that $\xi$
is finitely supported.
Choose a bounded sequence $(\xi_{n})_{n=1}^\infty$ in  $\ell^2(G, A)$
satisfying
\begin{itemize}
\item for any $n \in \mathbb{N}$, $\supp(\xi_{n}) \subset \supp(\xi)$,
\item for each $g\in G$, $[\xi_{n, g}]_n^\omega = \xi_{g}$.
\end{itemize}
Then $\lim_{n\rightarrow \omega} \| \langle \xi_{n}, \tilde{\alpha}_g(\xi_{n}) \rangle -1\|< \epsilon$ for all $g\in F$, $\lim_{n \rightarrow \omega} \| a\xi_{n} - \xi_{n} a\| =0$ for all $a\in A$.
By these two conditions, for each $n\in \mathbb{N}$, one can choose $k(n)\in \mathbb{N}$
satisfying $\langle \xi_{k(n)}, \tilde{\alpha}_g(\xi_{k(n)}) \rangle \approx_{\epsilon} 1$ for all $g\in F$
and $\| x \xi_{k(n)} - \xi_{k(n)} x\| < 1/n$ for all $x= a(1)_n, \ldots, a(k)_n, b(1)_n, \ldots, b(n)_n$.
Define $\eta \in \ell^2(G, A^\omega)$ to be
$\eta_g:= [\xi_{k(n), g}]_n^\omega$ for $g\in G$.
Then by the choice of $k(n)$'s, we obtain
\begin{itemize}
\item
$\langle \eta, \tilde{\alpha}_g(\eta)\rangle \approx_{\epsilon} 1$ for all $g\in F$,
\item  $a_i \eta = \eta a_i$ for all $i=1, \ldots, k$,
\item $\eta\in \ell^2(G, A^\omega \cap B')$.
\end{itemize} 
This proves condition (6).

(4)$\Rightarrow$ (1):
Assume that $\alpha^\omega$ has the QAP.
Then for any finite subsets $F \subset G$, $S\subset A$, and any $\epsilon>0$,
one can find an element $\xi \in \ell^2(G, A^\omega)$
with
\begin{itemize}
\item
$\langle \xi, \tilde{\alpha}_g(\xi)\rangle \approx_{\epsilon} 1$ for all $g\in F$,
\item
$\xi$ is finitely supported,
\item  $\|x\xi-\xi x\| < \epsilon$ for all $x\in S$.
\end{itemize} 
Choose a bounded sequence $(\xi_{n})_{n=1}^\infty$ in $\ell^2(G, A)$
with
\begin{itemize}
\item $\supp(\xi_n) \subset \supp(\xi)$ for all $n\in \mathbb{N}$,
\item $[\xi_{n, g}]_n^\omega = \xi_g$ for all $g\in G$.
\end{itemize}
Then by the choice of $\xi$, one can find $n \in \mathbb{N}$
satisfying
\begin{itemize}
\item
$\langle \xi_n, \tilde{\alpha}_g(\xi_n)\rangle \approx_{\epsilon} 1$ for all $g\in F$,

\item  $\|x\xi_n-\xi_n x\| < \epsilon$ for all $x\in S$.
\end{itemize}
This shows that $\alpha$ has the QAP.

(5)$\Rightarrow$ (1): The proof of this implication is similar to that of (4) $\Rightarrow$ (1). We therefore omit details.
\end{proof}

\begin{Rem}
By the proof of Theorem \ref{Thmint:QAP}
and obvious implications,
for any countable group action on a unital separable \Cs-algebra,
conditions (2) to (6) are pairwise equivalent,
and these conditions imply condition (1).
\end{Rem}

Now by Lemma \ref{Lem:fp} and Theorem \ref{Thmint:QAP},
we conclude the following useful result.
\begin{Cor}\label{Cor:pi}
Let $\alpha \colon G \curvearrowright A$
be a pointwise outer, QAP action on a unital Kirchberg algebra.
Then for any unital simple separable nuclear $G$-\Cs-subalgebra $B \subset A^\omega$,
the fixed point algebra $(A^\omega \cap B')^\alpha$ is purely infinite simple.
\end{Cor}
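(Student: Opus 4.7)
The plan is to deduce Corollary \ref{Cor:pi} by a direct application of Lemma \ref{Lem:fp}, taking $D := A^\omega$ equipped with $\alpha^\omega$ and playing the role of "$A$" in the lemma with our subalgebra $B$. First, I would note that $\alpha^\omega$ is saturated by the general facts on ultrapower actions recalled in the subsection on saturated \Cs-dynamical systems, and $B \subset A^\omega$ is a separable $G$-\Cs-subalgebra by hypothesis, so the basic framework of Lemma \ref{Lem:fp} applies.

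Next I would verify the three numbered conditions of Lemma \ref{Lem:fp} in turn. For condition (1), since $A$ is a unital Kirchberg algebra and $B \subset A^\omega$ is unital simple separable nuclear, Proposition 3.4 of \cite{KP} (the same result invoked in the proof of Lemma \ref{Lem:cob}) ensures that $A^\omega \cap B'$ is purely infinite simple. For condition (2), since $\alpha$ is pointwise outer on the Kirchberg algebra $A$, Theorem 3.1 of \cite{Sz18} (cf.~\cite{Na}) shows that the restriction of $\alpha^\omega$ to $A^\omega \cap B'$ remains pointwise outer; this again matches exactly what was done in Lemma \ref{Lem:cob}.

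For condition (3), which is the crucial point, I would invoke Theorem \ref{Thmint:QAP}. Since $\alpha$ is a QAP action of a countable exact group on a unital simple separable nuclear \Cs-algebra, the implication (1)$\Rightarrow$(6) of Theorem \ref{Thmint:QAP} gives that $\alpha^\omega$ restricted to $A^\omega \cap B'$ has the QAP. Thus for any finite $F \subset G$ and $\epsilon > 0$, there exists $\xi \in \ell^2(G, A^\omega \cap B')$ with $\langle \xi, \tilde{\alpha}_g(\xi)\rangle \approx_\epsilon 1$ for $g\in F$. Viewing $\xi$ as an element of $\ell^2(G, A^\omega) = \ell^2(G, D)$, every entry of $\xi$ commutes with every element of $B$, so in particular $s\xi = \xi s$ for all $s$ in any prescribed finite subset $S \subset B$. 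This verifies condition (3).

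Once the three conditions are checked, Lemma \ref{Lem:fp} immediately yields that $(A^\omega \cap B')^{\alpha^\omega}$ is purely infinite simple, which is the desired conclusion. The main obstacle conceptually is condition (3): amenability of $G$ is not available, and it is precisely to handle this that Theorem \ref{Thmint:QAP} was established, which lets us transfer the QAP from $\alpha$ to the restriction of $\alpha^\omega$ on the relative commutant and thereby produce approximately invariant vectors that automatically commute with $B$. The remaining conditions are essentially the same verifications already carried out in Lemma \ref{Lem:cob}.
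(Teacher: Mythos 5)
Your proposal is correct and follows essentially the same route as the paper: apply Lemma \ref{Lem:fp} to the inclusion $B \subset A^\omega$, checking condition (1) via Kirchberg--Phillips (the paper cites Theorem 1.17 of \cite{Sz18}), condition (2) via Theorem 3.1 of \cite{Sz18} together with standard reindexation, and condition (3) via Theorem \ref{Thmint:QAP}. The only cosmetic difference is that you invoke implication (1)$\Rightarrow$(6) to get $\xi$ exactly commuting with $B$, whereas the paper uses (1)$\Rightarrow$(4) (the QAP of $\alpha^\omega$ itself), which already suffices since condition (3) only requires approximate commutation with the finite set $S\subset B$.
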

\begin{proof}
By Lemma \ref{Lem:fp},
it suffices to show that  the inclusion $B \subset A^\omega$ satisfies
the assumptions of Lemma \ref{Lem:fp}. 

Condition (1) follows from Theorem 1.17 of \cite{Sz18} (cf.~Proposition 3.4 of \cite{KP}). Condition (2) follows from Theorem 3.1 of \cite{Sz18} (cf.~\cite{Na})
and standard reindexation arguments (cf.~\cite{Sz18}, Lemmas 5.1 and 5.2).
By Theorem \ref{Thmint:QAP}, $\alpha^\omega$ has the QAP.
In particular, the inclusion satisfies condition (3).
\end{proof}

The rest of the proof of the Main Theorem is mostly identical to that
of Theorem C in \cite{Sz18}
modulo using Lemma \ref{Lem:fp}, Theorem \ref{Thmint:QAP}, and Corollary \ref{Cor:pi}
in place of \cite{Sz18}, Lemma 2.3.

\begin{Lem}[cf.~\cite{Sz18}, Lemma 5.1]\label{Lem:5.1}
Let $D$ be a unital purely infinite simple \Cs-algebra.
Let $\alpha \colon G \curvearrowright D$ be a saturated, QAP action.
Assume that any separable $G$-\Cs-subalgebra $A\subset D$
satisfies the following conditions.
\begin{itemize}
\item
There is a unital embedding $\Cn{2} \rightarrow (D \cap A')^\alpha$.
\item
The restriction action $G \curvearrowright D \cap A'$ is faithful.
\end{itemize}
Let $\beta \colon G \curvearrowright B$ be an action
on a unital simple separable nuclear \Cs-algebra $B$.
Then any two unital $G$-equivariant $\ast$-homomorphisms $(B, \beta) \rightarrow (D, \alpha)$
are $G$-unitarily equivalent.
\end{Lem}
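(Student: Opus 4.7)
The plan is to use the two-by-two matrix trick from the proofs of Lemma \ref{Lem:cob} and Corollary \ref{Cor:emb}, substituting Lemma \ref{Lem:fp} for the F\o lner-based step in \cite{Sz18}. Given two unital $G$-equivariant $\ast$-homomorphisms $\varphi_0, \varphi_1 \colon (B, \beta) \to (D, \alpha)$, I would form the diagonal embedding $\psi \colon B \to \mathbb{M}_2(D)$ by $\psi(b) := \mathrm{diag}(\varphi_0(b), \varphi_1(b))$, which is equivariant with respect to $\mathrm{id}_{\mathbb{M}_2} \otimes \alpha$. The task is then to produce a $G$-invariant partial isometry $v \in \mathbb{M}_2(D) \cap \psi(B)'$ with $v v^\ast = e_{1,1}$ and $v^\ast v = e_{2,2}$; such a $v$ is necessarily of the form $e_{1,2} \otimes w$ for a unitary $w \in D^\alpha$ satisfying $\varphi_0 = \ad(w) \circ \varphi_1$, which is the desired $G$-unitary equivalence.

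Set $E := (\mathbb{M}_2(D) \cap \psi(B)')^{\mathrm{id}\otimes\alpha}$. I would reduce the existence of $v$ to two claims: (i) $E$ is purely infinite simple, and (ii) $[e_{1,1}]_0 = [e_{2,2}]_0$ in $K_0(E)$. For (ii), let $C \subset D$ be the separable $G$-\Cs-subalgebra generated by $\varphi_0(B) \cup \varphi_1(B)$. By hypothesis, there is a unital embedding $\mu \colon \Cn{2} \to (D \cap C')^\alpha$, and then $1_{\mathbb{M}_2} \otimes \mu$ unitally embeds $\Cn{2}$ into $E$, forcing $[1_E]_0 = 0$; granted (i), standard $K$-theoretic facts for purely infinite simple \Cs-algebras then yield $[e_{1,1}]_0 = [e_{2,2}]_0 = 0$, and Cuntz's theorem supplies $v$. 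For (i), I would apply Lemma \ref{Lem:fp} to the inclusion $\psi(B) \subset \mathbb{M}_2(D)$ with the saturated action $\mathrm{id}_{\mathbb{M}_2}\otimes\alpha$. Condition (1) (pure infinite simplicity of $\mathbb{M}_2(D) \cap \psi(B)'$) follows from the unital $\Cn{2}$-copy constructed above together with \cite{KP}; condition (2) (pointwise outerness) is to be obtained from the faithfulness hypothesis on $D \cap C'$ combined with Theorem 3.1 of \cite{Sz18} and a standard reindexation argument; condition (3) is furnished by the implication (1)$\Rightarrow$(6) of Theorem \ref{Thmint:QAP}, applied to the separable subalgebra $C$, by tensoring the resulting vectors in $\ell^2(G, D \cap C')$ with $1_{\mathbb{M}_2}$ so that they automatically commute with $\psi(B)$.

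The hard step I anticipate is the verification of condition (3), since this is precisely where amenability of $G$ produced F\o lner sets in \cite{Sz18}. Our substitute depends crucially on Theorem \ref{Thmint:QAP}(6), which in turn rests on Proposition \ref{Prop:QAP} and on the nuclearity and simplicity of the ambient algebra; care must be taken that these properties are in force when the relative QAP is invoked, and that the approximation produced in $D^\omega \cap C'$ genuinely descends to finitely supported witnesses at the level of $D$. Once Lemma \ref{Lem:fp} delivers (i), the remaining passage from approximate to exact via saturation of $\alpha$, and the final extraction of $w$ from $v$, proceed as in the proof of \cite{Sz18}, Lemma 5.1.
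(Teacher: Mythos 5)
Your overall route---the two-by-two matrix trick with Lemma \ref{Lem:fp} replacing Szab\'o's Lemma 2.3---is exactly what the paper intends: its proof of Lemma \ref{Lem:5.1} simply defers to the proof of \cite{Sz18}, Lemma 5.1 with that substitution, and your reconstruction (the diagonal embedding $\psi$, pure infinite simplicity of $E$, the vanishing of $[e_{1,1}]_0$ and $[e_{2,2}]_0$ via a commuting unital copy of $\Cn{2}$, and Cuntz's theorem producing $v=e_{1,2}\otimes w$ with $w\in D^\alpha$) is the right skeleton.

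The one step that would fail as written is your verification of condition (3) of Lemma \ref{Lem:fp}. You invoke the implication (1)$\Rightarrow$(6) of Theorem \ref{Thmint:QAP}, but that theorem is stated for actions on unital \emph{simple separable nuclear} \Cs-algebras and their ultrapowers, whereas the $D$ of Lemma \ref{Lem:5.1} is an abstract saturated system: it is nonseparable, is not assumed nuclear, and is not presented as an ultrapower of anything. So the hypotheses of Theorem \ref{Thmint:QAP} are not in force---a danger you flag yourself but do not resolve. Fortunately the detour is unnecessary: condition (3) of Lemma \ref{Lem:fp} asks only for $\xi\in\ell^2(G,\mathbb{M}_2(D))$ that is approximately $\tilde{\alpha}$-invariant on a finite set of group elements and approximately commutes with finitely many prescribed elements of $\psi(B)$, and this is supplied directly by the standing hypothesis that $\alpha$ has the QAP on $D$: take a QAP vector for $\alpha$ almost commuting with the finitely many entries $\varphi_0(b),\varphi_1(b)\in D$ and tensor it with $1_{\mathbb{M}_2}$. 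You need neither exact commutation, nor vectors valued in $D\cap C'$, nor the full relative QAP of condition (6). Two smaller points: condition (1) of Lemma \ref{Lem:fp} should be referenced to Theorem 1.17 of \cite{Sz18} (cf.\ \cite{KP}, Proposition 3.4) applied to the saturated purely infinite simple algebra $\mathbb{M}_2(D)$ and the separable subalgebra $\psi(B)$, not to the existence of a unital $\Cn{2}$-copy; and the conclusion $[e_{i,i}]_0=0$ uses that the copy $1_{\mathbb{M}_2}\otimes\mu(\Cn{2})$ commutes with $e_{i,i}$, so that cutting down gives a unital embedding $\Cn{2}\rightarrow e_{i,i}Ee_{i,i}$---the bare fact $[1_E]_0=0$ would not suffice. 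With these repairs your argument goes through.
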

\begin{proof}
The proof is the same as the original proof of \cite{Sz18}, Lemma 5.1,
after using Lemma \ref{Lem:fp} in place of \cite{Sz18}, Lemma 2.3.
\end{proof}
\begin{proof}[Proof of the Main Theorem]
We first observe that the statement in the first paragraph of Lemma 5.2 in \cite{Sz18}
holds true without amenability of the acting group. (This is an easy consequence of Theorem 3.1 in \cite{Sz18} (cf.~\cite{Na}) and standard reindexation arguments.)
Now the Main Theorem follows by the same proof as that of Theorem 5.5 of \cite{Sz18} (cf.~\cite{SzI}, Lemma 2.1),
after using Lemma \ref{Lem:5.1} and the second statement of Proposition \ref{Prop:QAP} instead of Lemma 5.1 and Corollary 5.4 in \cite{Sz18} respectively.
\end{proof}

\begin{Rem}\label{Rem:fp}
Here we remark that, for countable non-amenable exact groups $G$,
there is a pointwise outer, saturated action
of $G$ on a unital purely infinite simple \Cs-algebra
whose fixed point algebra fails to be simple.

To see this, choose an equivariantly $\Cn{2}$-absorbing, pointwise outer
action $\alpha \colon G \curvearrowright \Cn{2}$ which admits an invariant state.
(For instance, take the diagonal action of the trivial action on $\Cn{2}$
and the Bernoullli shift action $G \curvearrowright \bigotimes_{G} \Cn{2}$.)
Then notice that $\alpha_\omega$ also admits an invariant state.
Take a unitary representation
$u \colon G \rightarrow \mathcal{U}(\Cn{2})$
whose adjoint action $\ad(u)$ does not have an invariant state; for instance choose one of any $u_n$ from Example \ref{Exam:model}.

Then the statement of Corollary \ref{Cor:emb}, hence that of Lemma \ref{Lem:cob}, fails for these $\alpha$ and $u$.
Reviewing the proof of Lemma \ref{Lem:cob}, notice that the QAP of the considered action is used only to show
the simplicity of the fixed point algebra $D^\beta$ of
a pointwise outer, saturated action $\beta \colon G \curvearrowright D$
on a unital purely infinite simple \Cs-algebra (cf.~Lemma \ref{Lem:fp}).
Indeed, 
starting the argument in the proof to these $\alpha$ and $u$, (using the notations in the proof of Lemma \ref{Lem:cob},) one can still show that the (nonzero) projections $e_{1, 1}$ and $e_{2, 2}$ 
are properly infinite in $D^\beta$ and satisfy $[e_{1, 1}]_0=[e_{2, 2}]_0$ in $K_0(D^\beta)$.
Therefore, by \cite{Cun}, just from the simplicity of $D^\beta$,
one can conclude that these two projections are Murray--von Neumann equivalent in $D^\beta$.
As we have seen in the proof of Lemma \ref{Lem:cob},
this proves the statement of Lemma \ref{Lem:cob} for $\alpha$ and $u$.
This is a contradiction.
Thus $D^\beta$ is not simple.
\end{Rem}

\begin{Rem}\label{Rem:abs}
On the one hand,
similar to amenable groups (\cite{Izu04}, \cite{Sz18}, Remark 5.8), a pointwise outer, QAP
action of a non-amenable group on $\Cn{2}$ is not necessary equivariantly
$\Cn{2}$-absorbing.
To see this, recall from the proof of Theorem 5.1 in \cite{Suz19}
that the free group $\mathbb{F}_\infty$ admits a pointwise outer, QAP action $\alpha$
on a unital Kirchberg algebra $A$
whose reduced crossed product is stably isomorphic to $\Cn{\infty}$.
As observed in \cite{Izu04},
there is a pointwise outer action
$\beta \colon \mathbb{Z}_2 \curvearrowright \Cn{2}$
whose reduced crossed product has nonzero K-theory.
Then the product action $\zeta \colon \mathbb{F}_\infty \times \mathbb{Z}_2 \curvearrowright A \otimes \Cn{2} \cong \Cn{2}$
is pointwise outer, QAP, but its reduced crossed product has nonzero K-theory.
Thus $\zeta$ is not equivariantly $\Cn{2}$-absorbing.
On the other hand,
this phenomenon is expected to be related to torsions of the acting groups.
Works on amenable group actions (see e.g., \cite{IM2}, \cite{Mat}, \cite{Sz18b}) suggest that the equivariant $\Cn{2}$-absorbing assumption could be removed from the Main Theorem
for certain torsion free groups.
\end{Rem}
\section{Some equivariant $\Cn{\infty}$-absorption results for
exact groups}\label{Sec:Oinf}
In this section we prove Theorem \ref{Thmint:Oinf}, an equivariant variant of the Kirchberg $\Cn{\infty}$-absorption
theorem for QAP actions. This is an extension of another main result in \cite{Sz18} (Theorems B, 3.4, and Corollary 3.7) obtained for amenable groups.
However we remark that Theorem \ref{Thmint:Oinf} is not as satisfactory as the Main Theorem for non-amenable groups
because the absorbed model actions themselves do not have the QAP.
That said, we also expect that this result gives a new insight of
amenable actions on Kirchberg algebras.

\begin{proof}[Proof of Theorem \ref{Thmint:Oinf}]
We first observe that it suffices to show the following statement: For any unitary representation
$u \colon G \rightarrow \mathcal{U}(\Cn{\infty})$ which factors through $\Cn{2}$ (see Definition \ref{Defint:fact}),
there is a unital $G$-equivariant
$\ast$-homomorphism
$(\Cn{\infty}, \ad(u)) \rightarrow (A_\omega, \alpha)$.
Indeed once we have confirmed the claim,
standard reindexation arguments guarantee the existence
of a unital $G$-equivariant $\ast$-homomorphism
$\bigotimes_{\mathbb{N}}(\Cn{\infty}, \gamma) \rightarrow (A_\omega, \alpha)$.
By Proposition 5.3 of \cite{SzII}, $\bigotimes_{\mathbb{N}}\gamma$ is strongly self-absorbing.
Therefore Theorem 3.7 of \cite{SzI} or Theorem 4.11 of \cite{IM} completes the proof.

Let $u \colon G \rightarrow \mathcal{U}(\Cn{\infty})$
be a unitary representation factoring through $\Cn{2}$.
Take a unitary representation $v\colon G \rightarrow \U(\Cn{2})$
and an embedding $\iota \colon \Cn{2} \rightarrow \Cn{\infty}$
satisfying $u_g = \iota(v_g)+(1_{\Cn{\infty}}-\iota(1_{\Cn{2}}))$ for all $g\in G$.

By Corollary \ref{Cor:pi}, $(A_\omega)^\alpha$ is purely infinite simple.
In particular one can find a unital $\ast$-homomorphism
$\mu_0 \colon \Cn{\infty} \rightarrow (A_\omega)^\alpha$.
Set $p:= \mu_0(\iota(1_{\Cn{2}})) \in  (A_\omega)^\alpha$.
Set $D:= p A_\omega p$. Note that $A_\omega$, thus $D$, is purely infinite simple (\cite{KP}, Proposition 3.4).
Denote by $\zeta \colon G \curvearrowright D$ the restriction action of $\alpha_\omega$.
Then, as $D^\zeta = p(A_\omega)^\alpha p$, $D^\zeta$ is purely infinite simple.
Note that $\mu_0( \iota(\Cn{2})) \subset D^\zeta$.
By Theorem \ref{Thmint:QAP}, $\alpha_\omega$, therefore $\zeta$, has the QAP.

Define $\beta \colon G \curvearrowright \mathbb{M}_2(D)$ to be
\[\beta_g:=\ad \left(
    \begin{array}{cc}
      p & 0  \\
      0 & \mu_0(u_g)p 
    \end{array}
  \right) \circ (\id_{\mathbb{M}_2} \otimes \zeta_g) \qquad {\rm~for~}g\in G.\]
Observe that, as $\zeta$ has the QAP, so does $\beta$.
By Corollary 1.10 and Proposition 1.12 in \cite{Sz18}, $\beta$ is saturated.
Since $\mu_0(u_g)p=\mu_0(\iota(v_g))$, one can find a sequence $(\nu_n \colon \Cn{2} \rightarrow \mu_0(\iota(\Cn{2})))_{n=1}^\infty$
of unital $\ast$-homomorphisms
with $\lim_{n\rightarrow \infty} [\nu_n(x), \mu_0(u_g)p ] =0$ for all $g\in G$.
Since $\beta$ is saturated,
one can take a unital embedding
\[\Cn{2} \rightarrow \mathbb{M}_2(D)^\beta \cap \{e_{1, 1}, e_{2, 2}\}'.\]
Hence \[[e_{1, 1}]_0 =[e_{2, 2}]_0=0 \qquad
{\rm in~} K_0(\mathbb{M}_2(D)^\beta).\]
Since $\beta$ is saturated and satisfies the QAP, 
Lemma \ref{Lem:fp} yields that  $\mathbb{M}_2(D)^\beta$ is purely infinite simple.
Thus, by \cite{Cun}, one can find a unitary $w_0\in \U(D)$
satisfying $e_{1, 2} \otimes w_0\in  \mathbb{M}_2(D)^\beta$.
Direct computations (cf.~ \cite{Sz18}, Lemma 2.9) show that
$\mu_0(\iota(v_g))=w_0\alpha_g(w_0^\ast)$ for all $g\in G$.
Set $w:= w_0 + (1-p) \in \mathcal{U}(A_\omega)$.
Then $\mu_0(u_g) = w \alpha_g(w^\ast)$.

Now define $\mu:= \ad(w^\ast) \circ \mu_0 \colon \Cn{\infty} \rightarrow A_\omega$.
Then direct computations show that
\[\mu \circ \ad(u_g) = \alpha_g \circ \mu \qquad {\rm~for~ all~} g\in  G.\]
As explained in the first paragraph of the proof, this completes the proof.
\end{proof}
\subsection*{Concluding remarks}
The Main Theorem concludes the existence of \emph{the largest} action on Kirchberg algebras for exact groups (with respect to the tensor products, up to strong cocycle conjugacy).
For $\Cn{\infty}$, certainly it is more desirable to obtain an equivariant $\Cn{\infty}$-absorption theorem for some model action \emph{with the QAP} instead of our present $\gamma$.
More precisely, to understand (pointwise outer) QAP actions (on Kirchberg algebras) more deeply, it is desirable to find \emph{the smallest} QAP actions on Kirchberg algebras in the same sense.
However we remark that
even the existence of a QAP action on $\Cn{\infty}$
is not clear.
At the moment we know such actions only for free groups 
(and therefore for some groups constructed from free groups);
see the proof of Theorem 5.1 in \cite{Suz19}.
\subsection*{Acknowledgements}
The author is grateful to the referee for his/her careful reading and helpful suggestions.
This work was supported by JSPS KAKENHI Early-Career Scientists
(No.~19K14550).


\begin{thebibliography}{99}
\bibitem{AD79} C.~ Anantharaman-Delaroche, {\it Action moyennable d'un groupe localement compact sur une alg\`ebre de von Neumann.} Math.~ Scand.~ {\bf 45} (1979), 289--304.
\bibitem{AD}C.~Anantharaman-Delaroche, {\it Syst\`{e}mes dynamiques non commutatifs et moyennabilit\'e}, Math. Ann. 279 (1987), 297--315.
\bibitem{AD02} C.~Anantharaman-Delaroche, {\it Amenability and exactness for dynamical systems and their \Cs-algebras.} Trans.~Amer.~Math.~Soc. {\bf 354} (2002), no. 10, 4153--4178.
\bibitem{BO} N.~ P.~ Brown, N.~Ozawa, {\it \Cs-algebras and finite-dimensional approximations.} Graduate Studies in Mathematics {\bf 88}. American Mathematical Society, Providence, RI, 2008.
\bibitem{BEW} A.~Buss, S.~ Echterhoff, R.~Willett,
{\it Injectivity, crossed products, and amenable group actions.}
Contemporary~ Math.~{\bf 749} (2020), 105--138.
\bibitem{Con}A.~ Connes, {\it Periodic automorphisms of the hyperfinite factors of type II$_1$.} Acta Sci.~ Math.~ {\bf 39} (1977), 39--66.
\bibitem{Cun}J.~Cuntz, {\it K-theory for certain \Cs-algebras.} Ann.~of Math.~ {\bf 113} (1981), 181--197.
\bibitem{FH} I.~ Farah, B.~ Hart, {\it Countable saturation of corona algebras.} C.~ R.~ Math.~ Rep.~ Acad.~ Sci.~ Canada {\bf 35} (2013), no.~ 2, 35--56.
\bibitem{Gro}M.~ Gromov, {\it Random walk in random groups.} Geom.~ Funct.~ Anal.~ {\bf 13} (2003), no.~1, 73--146.
\bibitem{Izu04}M.~ Izumi, {\it Finite group actions on \Cs-algebras with the Rohlin property I.} Duke Math.~ J.~ {\bf 122} (2004), no.~2, 233--280.
\bibitem{IICM} M.~ Izumi, {\it Group Actions on Operator Algebras.} Proc.~ Intern.~ Congr.~ Math.~ (2010), 1528--1548.
\bibitem{IM}M.~Izumi, H.~Matui, {\it Poly-$\mathbb{Z}$ group actions on Kirchberg algebras I.}
To appear in Int.~ Math.~ Res.~ Not., arXiv:1810.05850.
\bibitem{IM2}M.~Izumi, H.~Matui, {\it Poly-$\mathbb{Z}$ group actions on Kirchberg algebras II.}
To appear in Invent.~Math., arXiv:1906.03818v2.
\bibitem{Kir} E.~ Kirchberg, {\it The classification of purely infinite \Cs-algebras using Kasparov's theory.} Preprint.
\bibitem{KP}E.~ Kirchberg, N.~ C.~ Phillips, {\it Embedding of exact \Cs-algebras in the Cuntz algebra $\Cn{2}$.} J.~ reine angew.~ Math.~ {\bf 525} (2000), 17--53.
\bibitem{KW}E.~ Kirchberg, S.~ Wassermann, {\it Exact groups and continuous bundles of \Cs-algebras.} Math.~ Ann.~ {\bf 315} (1999), no.2, 169--203.
\bibitem{Mat}H.~Matui, {\it Classification of outer actions of $\mathbb{Z}^N$ on $\Cn{2}$.}
Adv.~ Math.~ {\bf 217} (2008), 2872--2896.
\bibitem{Na}H.~Nakamura, {\it Aperiodic automorphisms of nuclear purely infinite simple \Cs-algebras.} Ergodic Theory Dynam. Systems {\bf 20} (2000), 1749--1765. 
\bibitem{Os} D.~Osajda, {\it Small cancellation labellings of some infinite graphs and applications.} Acta Math.~ {\bf 225} (2020), no. 1, 159--191.
\bibitem{Oz}N.~ Ozawa, {\it Amenable actions and exactness for discrete groups.}
C.~ R.~ Acad.~ Sci.~ Paris Ser.~ I Math., {\bf 330} (8) (2000), 691--695.
 \bibitem{OzICM}N.~ Ozawa, {\it Amenable Actions And Applications.}
International Congress of Mathematicians, Vol.~ II, 1563--1580.
\bibitem{Phi}N.~ C.~ Phillips, {\it A classification theorem for nuclear purely infinite simple \Cs-algebras.} Doc.~ Math.~ {\bf 5} (2000), 49--114.

\bibitem{Ror}M.~ R{\o}rdam, {\it Classification of nuclear, simple \Cs-algebras.} vol.~126 of Encyclopaedia Math.~ Sci., Springer, Berlin, 2002, 1--145.
\bibitem{Suzeq}Y.~Suzuki, {\it Simple equivariant \Cs-algebras whose full and reduced crossed products coincide.}
J.~ Noncommut.~ Geom. {\bf 13} (2019), 1577--1585.
\bibitem{Suz19}Y.~Suzuki, {\it Complete descriptions of intermediate operator algebras by intermediate extensions of dynamical systems.}
Comm.~Math.~Phys. {\bf 375} (2020), 1273--1297.
\bibitem{Suz20a}Y.~Suzuki, {\it On pathological properties of fixed point algebras in Kirchberg algebras.} Proc.~ Roy.~ Soc.~ Edinburgh Sect.~ A {\bf 150} (2020), 3087--3096.
\bibitem{Suz20b}Y.~Suzuki, {\it Non-amenable tight squeezes by Kirchberg algebras.}
Preprint, arXiv:1908.02971.
\bibitem{SzI}G.~ Szab\'o, {\it Strongly self-absorbing C*-dynamical systems.} Trans. Amer. Math. Soc. {\bf 370} (2018), 99--130 [with an eratta].
\bibitem{SzII}G.~ Szab\'o, {\it Strongly self-absorbing C*-dynamical systems II.} J.~ Noncomm.~ Geom.~ {\bf 12} (2018), no.~ 1, 369--406.
\bibitem{Sz18}G.~ Szab\'o, {\it Equivariant Kirchberg-Phillips-type absorption for amenable group actions.} Comm.~ Math.~ Phys., {\bf 361} (2018), no. 3, 1115--1154.
\bibitem{Sz18b}G.~ Szab\'o, {\it Actions of certain torsion-free elementary amenable groups on strongly self-absorbing \Cs-algebras.} Comm.~ Math.~ Phys.~ {\bf 371} (2019), no. 1, pp. 267--284.
\bibitem{TW}A.~ S.~ Toms, W.~ Winter, {\it Strongly self-absorbing \Cs-algebras.} Trans.~ Amer.~ Math.~ Soc.~ {\bf 359} (2007), no. 8, 3999--4029.
\end{thebibliography}
\end{document}